\newcommand{\lr}{\longrightarrow}
\let\c\overline
\theoremstyle{plain}
\newtheorem{theorem}{Theorem}[section]
\newtheorem*{theorem*}{Theorem \ref{thm-main}}
\newtheorem*{theorem**}{Theorem \ref{thm-dol-11-dim-char}}
\newtheorem*{proposition*}{Proposition \ref{prop-alm-kahl}}
\newtheorem{lemma}[theorem]{Lemma}
\newtheorem{proposition}[theorem]{Proposition}
\newtheorem{corollary}[theorem]{Corollary}
\newtheorem{remark}[theorem]{Remark}
\newtheorem{question}[theorem]{Question}
\newtheorem*{question*}{Question}
\newtheorem*{mt*}{Main Theorem}
\newcommand\C{{\mathbb C}}
\newcommand\R{{\mathbb R}}
\newcommand{\del}{{\partial}}
\newcommand{\delbar}{{\overline{\del}}}
\newcommand{\mubar}{{\overline{\mu}}}
\newcommand{\cinf}{\mathcal{C}^\infty}
\renewcommand{\H}{\mathcal{H}}
\DeclareMathOperator{\vol}{Vol}
\DeclareMathOperator{\real}{Re}
\DeclareMathOperator{\img}{Im}
\DeclareMathOperator{\End}{End}
\DeclareMathOperator{\id}{id}
\let\c\overline
\let\phi\varphi
\title[Almost Hermitian compact quotients of Lie groups]{Dolbeault Harmonic $(1,1)$-forms on $4$-dimensional compact quotients of Lie Groups with a left invariant almost Hermitian structure}
\author{Riccardo Piovani}
\address{Dipartimento di Scienze Matematiche, Fisiche e Informatiche\\
Unit\`{a} di Matematica e Informatica\\
Universit\`{a} degli Studi di Parma\\
Parco Area delle Scienze 53/A \\
43124 Parma, Italy}
\email{riccardo.piovani@unipr.it}
\keywords{Dolbeault Laplacian; Invariant almost complex structure; 4-manifold}
\thanks{\newline 
The author is partially supported by GNSAGA of INdAM}
\subjclass[2020]{22E25; 32Q60; 53C15}
\begin{document}
\maketitle

\begin{abstract} 
We study Dolbeault harmonic $(1,1)$-forms on compact quotients $M=\Gamma\backslash G$ of $4$-dimensional Lie groups $G$ admitting a left invariant almost Hermitian structure $(J,\omega)$. In this case, we prove that the space of Dolbeault harmonic $(1,1)$-forms on $(M,J,\omega)$ has dimension $b^-+1$ if and only if there exists a left invariant anti self dual $(1,1)$-form $\gamma$ on $(G,J)$ satisfying $id^c\gamma=d\omega$. Otherwise, its dimension is $b^-$. In this way, we answer to a question by Zhang.
\end{abstract}

\section{Introduction}
Let $(M,J)$ be an almost complex manifold of real dimension $2n$. Given, on $(M,J)$, an almost Hermitian metric $g$, with fundamental form $\omega$, the triple $(M,J,\omega)$ will be called an almost Hermitian manifold. The exterior derivative decomposes into
\[
d=\mu+\del+\delbar+\c\mu.
\]
By the map $*:A^{p,q}\to A^{n-q,n-p}$, we denote the $\C$-linear extension of the real Hodge $*$ operator. 
We set
$\del^*:=-*\delbar*$ and $\delbar^*:=-*\del*$, which are the $L^2$ formal adjoint operators respectively of $\del$ and $\delbar$, i.e., they are $L^2$ adjoint when $M$ is compact. Recall that
\begin{gather*}
\Delta_{\delbar}:=\delbar\delbar^*+\delbar^*\delbar
\end{gather*}
is the Dolbeault Laplacian, which is a formally self adjoint elliptic operator of order $2$. 
We set 
\[
\H^{p,q}_{\delbar}:=\ker\Delta_{\delbar}\cap A^{p,q}
\]
to be the space of Dolbeault harmonic $(p,q)$-forms.  
If $M$ is compact, it is well known that the dimension
\[
h^{p,q}_{\delbar}:=\dim_\C\H^{p,q}_{\delbar},
\]
is finite. If the almost complex structure $J$ is integrable, i.e., if $(M,J)$ is a complex manifold, then the numbers $h^{p,q}_{\delbar}$ depend only on the complex structure and not on the metric, being the dimensions of the $(p,q)$-Dolbeault cohomology spaces by Hodge theory.

Whether or not the numbers $h^{p,q}_{\delbar}$ depend on the choice of the almost Hermitian metric $\omega$ on a given compact almost Hermitian manifold $(M,J,\omega)$ is a question by Kodaira and Spencer, which appeared as Problem 20 in Hirzebruch’s 1954 problem list \cite{Hi}.
Recently, in \cite{HZ} (cf. \cite{HZ2}), Holt and Zhang proved that the number $h^{0,1}_{\delbar}$ depends on the choice of the metric on a given compact almost complex $4$-manifold, answering the question of Kodaira and Spencer. They also proved that on a compact almost K\"ahler $4$-manifold it holds that $h^{1,1}_\delbar=b^-+1$, see \cite[Proposition 6.1]{HZ}.
Later, in \cite{TT}, Tardini and Tomassini proved that the number $h^{1,1}_{\delbar}$ also depends on the choice of the metric. In particular, on a compact almost Hermitian $4$-manifold $(M,J,\omega)$,
they proved that $h^{1,1}_{\delbar}=b^-$ if $\omega$ is strictly locally conformally almost K\"ahler, and $h^{1,1}_{\delbar}=b^-+1$ if $\omega$ is globally conformally almost K\"ahler, see \cite[Theorem 3.7]{TT}. 
Moreover, very recently, in \cite[Theorem 3.1]{Ho} Holt proved that $h^{1,1}_{\delbar}=b^-+1$ and $h^{1,1}_{\delbar}=b^-$ are the only two possible options on a compact almost Hermitian 4-manifold. As a corollary, he obtained the following
\begin{theorem}[{\cite[Corollary 3.2]{Ho}}]\label{thm-dol-11-dim-char}
Let $(M,J,\omega)$ be a compact almost Hermitian $4$-manifold. Assume that $\omega$ is Gauduchon, i.e., $\del\delbar\omega=0$. Then, $h^{1,1}_\delbar(M,J,\omega)=b^-+1$ if and only if there exists an anti self dual $(1,1)$-form $\gamma\in A^{1,1}(M,J)$ satisfying
\[
id^c\gamma=d\omega.
\]
Otherwise, $h^{1,1}_\delbar(M,J,\omega)=b^-$.
\end{theorem}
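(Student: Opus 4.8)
The plan is to work with the $\omega$-trace decomposition $\psi=b\omega+\gamma_0$ of $(1,1)$-forms (with $\gamma_0$ primitive), to rewrite $\bar\partial$-harmonicity as two first-order equations, and then to detect the ``extra'' harmonic direction through a single scalar elliptic equation that is available precisely because $\omega$ is Gauduchon.

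First I would set up preliminaries special to real dimension $4$. Here $A^{3,0}=A^{0,3}=0$, so $\mu\omega=\bar\mu\omega=0$ and $\mu\gamma=\bar\mu\gamma=0$ for every $(1,1)$-form $\gamma$; hence $d\omega=\partial\omega+\bar\partial\omega$ and, with the convention $d^c=i(\bar\partial-\partial)$, one has $id^c\gamma=\partial\gamma-\bar\partial\gamma$. Comparing bidegrees, the equation $id^c\gamma=d\omega$ is therefore equivalent to the pair $\partial\gamma=\partial\omega$ and $\bar\partial\gamma=-\bar\partial\omega$. I also record that an anti self dual $(1,1)$-form is exactly a primitive one, so $*\psi=b\omega-\gamma_0$ for $\psi=b\omega+\gamma_0$; and that a primitive $(1,1)$-form is $\bar\partial$-harmonic if and only if it is $d$-closed, since $\bar\partial^*\gamma_0=-*\partial*\gamma_0=*\partial\gamma_0$ forces $\partial\gamma_0=\bar\partial\gamma_0=0$ and then $d\gamma_0=0$ (as $\mu\gamma_0=\bar\mu\gamma_0=0$). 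Thus the primitive $\bar\partial$-harmonic $(1,1)$-forms are the complexification of the anti self dual $d$-harmonic forms, a space of complex dimension $b^-$.

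For the ``if'' direction, given a primitive $\gamma$ with $\partial\gamma=\partial\omega$ and $\bar\partial\gamma=-\bar\partial\omega$, I set $\psi:=\omega+\gamma$. Then $\bar\partial\psi=\bar\partial\omega+\bar\partial\gamma=0$, and since $*\psi=\omega-\gamma$ I get $\bar\partial^*\psi=-*\partial(\omega-\gamma)=-*(\partial\omega-\partial\gamma)=0$, so $\psi$ is $\bar\partial$-harmonic. Its $\omega$-component is nonzero, hence $\psi$ is independent from the $b^-$-dimensional space of primitive harmonic forms, giving $h^{1,1}_\delbar\ge b^-+1$; by the dichotomy $h^{1,1}_\delbar\in\{b^-,b^-+1\}$ recalled above, this forces equality.

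The ``only if'' direction is where the work lies. If $h^{1,1}_\delbar=b^-+1$, then since the primitive harmonic forms contribute only $b^-$ dimensions there is a $\bar\partial$-harmonic $\psi=b\omega+\gamma_0$ with $b\not\equiv0$. Writing out $\bar\partial\psi=0$ and $\bar\partial^*\psi=-*\partial*\psi=0$ gives $\bar\partial b\wedge\omega+b\,\bar\partial\omega+\bar\partial\gamma_0=0$ and $\partial b\wedge\omega+b\,\partial\omega-\partial\gamma_0=0$. I would eliminate $\gamma_0$ by applying $\partial$ to the first equation and $\bar\partial$ to the second, equating the two expressions for $\partial\bar\partial\gamma_0$, and using $\partial\bar\partial\omega=0$ to cancel the terms proportional to $b$; I expect to obtain the scalar, top-degree equation $\partial\bar\partial b\wedge\omega-\bar\partial b\wedge\partial\omega+\partial b\wedge\bar\partial\omega=0$. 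The main obstacle, and the heart of the proof, is to read this correctly: its leading term $\partial\bar\partial b\wedge\omega$ is a multiple of $-i$ times the Chern Laplacian of $b$, the remaining terms are first order, and \emph{because $\omega$ is Gauduchon there is no zeroth-order term}. I would then verify that the operator equals $i$ times a real second-order elliptic operator $\tilde L$ with no zeroth-order term (the form $i\partial\bar\partial f$ and the combined torsion term being real for real $f$), apply $\tilde L$ to $\real b$ and $\img b$, and invoke the strong maximum principle on the compact manifold $M$ to conclude that each is constant. With $b$ a nonzero constant, the two first-order equations collapse to $\partial(\gamma_0/b)=\partial\omega$ and $\bar\partial(\gamma_0/b)=-\bar\partial\omega$, so $\gamma:=\gamma_0/b$ is a primitive, hence anti self dual, $(1,1)$-form satisfying $id^c\gamma=d\omega$.
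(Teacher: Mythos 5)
Your proof is correct and is essentially the argument of the cited sources \cite{TT,Ho}, which this paper imports as Theorem \ref{thm-dol-11-dim-char} without reproving: decompose a Dolbeault harmonic $(1,1)$-form as $\psi=b\omega+\gamma_0$ into trace and primitive (= anti self dual) parts, eliminate $\gamma_0$ to obtain a second-order elliptic scalar equation for $b$ whose zeroth-order term vanishes precisely because $\omega$ is Gauduchon, and conclude by the strong maximum principle that $b$ is constant, which yields both the characterization $\H^{1,1}_{\delbar}=\{f\omega+\gamma\,|\,f\in\C,\ *\gamma=-\gamma,\ fd\omega=id^c\gamma\}$ and the stated dichotomy. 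The one presentational point is that the dichotomy $h^{1,1}_{\delbar}\in\{b^-,b^-+1\}$ you invoke in the ``if'' direction is not an external input but a consequence of your own constancy result in the ``only if'' part, so it should be derived there first rather than cited as ``recalled above''.
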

Recall that if $\omega$ is Gauduchon, then the space of Dolbeault harmonic $(1,1)$-forms is expressed by
\[
\H^{1,1}_\delbar=\{f\omega+\gamma\,|\,f\in\C,\ *\gamma=-\gamma,\ fd\omega=id^c\gamma\}.
\]
See \cite{piovani-parma} for a self contained survey of the above results.
See \cite{Ho,PT4} for similar results about Bott-Chern harmonic forms on compact almost Hermitian $4$-manifolds.
See also \cite{cattaneo-tardini-tomassini,piovani-tardini,tardini-tomassini-dim6} for other interesting results concerning Dolbeault and Bott-Chern harmonic forms on compact almost Hermitian manifolds of higher dimension.

Note that, in the integrable case, it is well known that a compact complex surface $(M,J)$ admits a K\"ahler metric if and only if $b^1$ is even, and $b^1$ is even if and only if $h^{1,1}_{\delbar}=b^-+1$, see e.g., \cite{BPV}. Looking at \cite[Proposition 6.1]{HZ} and \cite[Theorem 3.7]{TT}, on a $4$-manifold endowed with a non integrable almost Hermitian structure, one might expect that $h^{1,1}_{\delbar}=b^-+1$ if and only if the metric is globally conformally almost K\"ahler, see \cite[Question 3.3]{Ho}.
However, in the non integrable case, it might happen that $h^{1,1}_{\delbar}=b^-+1$ when the almost Hermitian metric is not globally conformally almost K\"ahler. Indeed, in \cite{PT5}, Tomassini and the author of the present paper proved that $h^{1,1}_{\delbar}=b^-+1$ on an explicit example of a $4$-dimensional compact almost complex manifold endowed with a non globally conformally almost K\"ahler metric.

Nonetheless, it is reasonable to expect $h^{1,1}_\delbar$ to detect almost K\"ahlerness in some other sense, as in the integrable case. Indeed, Zhang recently raised the following
\begin{question}[{\cite[Section 2.5]{zhang-parma}}]\label{question-zhang}
Let $(M,J)$ be a compact almost complex $4$-manifold. Is it true that $(M,J)$ admits an almost K\"ahler metric if and only if
\[
\underset{\omega \text{ almost Hermitian}}{\max} h^{1,1}_\delbar(M,J,\omega)=b^-+1
\]
holds?
\end{question}

In this paper, we show an improvement of Theorem \ref{thm-dol-11-dim-char} in the setting of compact quotients of Lie groups admitting a left invariant almost Hermitian structure. Our primary tool is the symmetrization process, following the ideas of \cite{belgun,fino-grantcharov}. The main result of this note is the next theorem.
\begin{theorem*}
Let $G$ be a $4$-dimensional Lie group and let $\Gamma$ be a discrete subgroup such that $M=\Gamma\backslash G$ is compact. Assume that $(J,\omega)$ is a left invariant almost Hermitian structure on $G$. Then, $h^{1,1}_\delbar(M,J,\omega)=b^-+1$ if and only if there exists a left invariant anti self dual $(1,1)$-form $\gamma\in A^{1,1}(G,J)$ satisfying
\[
id^c\gamma=d\omega.
\]
Otherwise, $h^{1,1}_\delbar(M,J,\omega)=b^-$.
\end{theorem*}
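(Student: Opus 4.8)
The plan is to reduce the statement to Theorem \ref{thm-dol-11-dim-char} via symmetrization, replacing the a priori arbitrary anti self dual form produced by that theorem with a left invariant one. First I would observe that every left invariant almost Hermitian metric on such an $M$ is automatically Gauduchon, so that Theorem \ref{thm-dol-11-dim-char} applies directly. Indeed $\del\delbar\omega$ is a left invariant $(2,2)$-form, and on a $4$-manifold $A^{2,2}$ is spanned by the volume form, so $\del\delbar\omega=c\,\tfrac{\omega^2}{2}$ with $c$ a constant (left invariant functions on the connected group $G$ are constant). A bidegree count shows $d\delbar\omega=\del\delbar\omega$, since $\mu\delbar\omega$, $\delbar\delbar\omega$, and $\mubar\delbar\omega$ vanish for degree reasons on a complex surface; hence $\del\delbar\omega$ is exact and $\int_M\del\delbar\omega=0$ by Stokes. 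As $\int_M\tfrac{\omega^2}{2}=\vol(M)>0$, this forces $c=0$, i.e.\ $\del\delbar\omega=0$.

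Granting this, Theorem \ref{thm-dol-11-dim-char} gives $h^{1,1}_\delbar(M,J,\omega)\in\{b^-,b^-+1\}$, with the larger value attained exactly when some anti self dual $\gamma\in A^{1,1}(M,J)$ satisfies $id^c\gamma=d\omega$. The entire content then reduces to the claim that such a $\gamma$ exists on $M$ if and only if a left invariant one exists on $(G,J)$. The ``if'' direction is immediate, as left invariant forms descend to $M$. For ``only if'' I would use symmetrization: since $\Gamma\backslash G$ is compact, $G$ is unimodular, and the symmetrization map $\alpha\mapsto\tilde\alpha$ --- averaging the coefficients of $\alpha$ in a left invariant coframe against the invariant probability volume --- is the identity on left invariant forms and commutes with $d$ (Belgun's lemma \cite{belgun}, where unimodularity enters through $\int_M(Xf)\,\mathrm{vol}=0$ for left invariant vector fields $X$). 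Because $J$ is left invariant, $\alpha\mapsto\tilde\alpha$ preserves the bidegree, so it also commutes with $\del$, $\delbar$ and with $d^c$, the latter being built from $d$ and $J$.

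The crucial extra point --- and the step I expect to be the main obstacle --- is that symmetrization also commutes with the Hodge star $*$. This is exactly where left invariance of $\omega$ is used: in a left invariant coframe $*$ sends invariant forms to invariant forms, hence acts by constant coefficients, and a constant-coefficient operator commutes with coefficient-averaging. Granting this, for any anti self dual $\gamma$ on $M$ with $id^c\gamma=d\omega$ I would set $\tilde\gamma$ to be its symmetrization: then $\tilde\gamma$ is a left invariant $(1,1)$-form, it is anti self dual since $*\tilde\gamma=\widetilde{*\gamma}=-\tilde\gamma$, and $id^c\tilde\gamma=\widetilde{id^c\gamma}=\widetilde{d\omega}=d\omega$ because $d\omega$ is already left invariant. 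Thus $\tilde\gamma$ is the desired left invariant anti self dual $(1,1)$-form, and the equivalence --- hence the theorem --- follows.
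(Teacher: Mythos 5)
Your proof is correct and follows essentially the same route as the paper: reduce to Theorem \ref{thm-dol-11-dim-char} after checking that left invariant metrics are automatically Gauduchon, then symmetrize the anti self dual form produced on $M$, using that averaging is the identity on left invariant forms, commutes with $d$ (hence with $\del$, $\delbar$, $d^c$ by bidegree preservation) and commutes with $*$ because the metric coefficients are constant in a left invariant coframe. The only (immaterial) deviation is your Gauduchon argument via Stokes applied to $\del\delbar\omega=d\delbar\omega$, where the paper instead integrates $d^{*_\omega}\theta$ for the Lee form $\theta$; both are valid.
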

We use an application of Theorem \ref{thm-main}, i.e., Corollary \ref{cor-char}, in many successive examples of left invariant almost Hermitian structures on compact quotients of solvable Lie groups, namely on the primary and secondary Kodaira surfaces, on the Inoue surfaces of type $S_M$, on the Hyperelliptic surfaces and on the $4$-dimensional nilmanifold which does not admit complex structures.

We remark that Theorem \ref{thm-main} is particularly useful when proving that $h^{1,1}_\delbar=b^-$ just having to show that there exist no left invariant anti self dual $(1,1)$-forms $\gamma\in A^{1,1}(G,J)$ satisfying $id^c\gamma=d\omega$, whereas, if one only uses Theorem \ref{thm-dol-11-dim-char}, one should prove that there exist no anti self dual $(1,1)$-forms $\gamma\in A^{1,1}(M,J)$ satisfying $id^c\gamma=d\omega$. In short, Theorem \ref{thm-main} allows us to just consider left invariant anti self dual $(1,1)$-forms when computing $h^{1,1}_\delbar$.

Furthermore, thanks to Corollary \ref{cor-char}, we are also able to answer Question \ref{question-zhang} negatively. In fact, both on the secondary Kodaira surface and on the Inoue surface of type $S_M$, we build a left invariant, non integrable almost complex structure which admits almost Hermitian metrics with $h^{1,1}_{\delbar}=b^-+1$, but does not admit almost K\"ahler metrics since $b^2=0$; see Propositions \ref{prop-sec-kod} and \ref{prop-inoue}. Here, Corollary \ref{cor-char} just plays the role of helping us to find such almost Hermitian metrics with $h^{1,1}_{\delbar}=b^-+1$.

The paper is organized in the following way. In section \ref{sec-preliminaries}, we introduce some standard preliminaries of almost complex and almost Hermitian geometry. In section \ref{sec-lie-groups}, we recall the symmetrization process and prove our main result, Theorem \ref{thm-main}, and  Corollary \ref{cor-char}. In section \ref{sec-applications} we apply Corollary \ref{cor-char} to many examples of left invariant almost Hermitian structures on compact quotients of solvable Lie groups. In this way, we also answer Question \ref{question-zhang}.

\medskip\medskip
\noindent{\em Acknowledgments.}
The author would like to thank sincerely Tom Holt, Nicoletta Tardini, Adriano Tomassini and Weiyi Zhang for useful and interesting conversations, as well as the referee for reading carefully the paper and for many valuable suggestions.

\section{Preliminaries}\label{sec-preliminaries}

Throughout this paper, we will only consider connected manifolds without boundary.
Let $(M,J)$ be an almost complex manifold of dimension $2n$, i.e., a $2n$-differentiable manifold endowed with an almost complex structure $J$, that is $J\in\End(TM)$ and $J^2=-\id$. The complexified tangent bundle $T_{\C}M=TM\otimes\C$ decomposes into the two eigenspaces of $J$ associated to the eigenvalues $i,-i$, which we denote respectively by $T^{1,0}M$ and $T^{0,1}M$, giving
\begin{equation*}
T_{\C}M=T^{1,0}M\oplus T^{0,1}M.
\end{equation*}
We denote by $\Lambda^{1,0}M$ and $\Lambda^{0,1}M$ the dual vector bundles of $T^{1,0}M$ and $T^{0,1}M$, respectively, and set
\begin{equation*}
\Lambda^{p,q}M=\bigwedge^p\Lambda^{1,0}M\wedge\bigwedge^q\Lambda^{0,1}M
\end{equation*}
to be the vector bundle of $(p,q)$-forms. Let $A^{p,q}=A^{p,q}(M,J)=\Gamma(M,\Lambda^{p,q}M)$ be the space of smooth sections of $\Lambda^{p,q}M$. We denote by $A^k=A^k(M)=\Gamma(M,\Lambda^{k}M)$ the space of $k$-forms. Note that $\Lambda^{k}M\otimes\C=\bigoplus_{p+q=k}\Lambda^{p,q}M$.

Let $f\in\cinf(M,\C)$ be a smooth function on $M$ with complex values. Its differential $df$ is contained in $A^1\otimes\C=A^{1,0}\oplus A^{0,1}$. On complex 1-forms, the exterior derivative acts as
\[
d:A^1\otimes\C\to A^2\otimes\C=A^{2,0}\oplus A^{1,1}\oplus A^{0,2}.
\]
 Therefore, it turns out that the exterior derivative operates on $(p,q)$-forms as
\begin{equation*}
d:A^{p,q}\to A^{p+2,q-1}\oplus A^{p+1,q}\oplus A^{p,q+1}\oplus A^{p-1,q+2},
\end{equation*}
where we denote the four components of $d$ by
\begin{equation*}
d=\mu+\del+\delbar+\c\mu.
\end{equation*}
From the relation $d^2=0$, we derive
\begin{equation*}
\begin{cases}
\mu^2=0,\\
\mu\del+\del\mu=0,\\
\del^2+\mu\delbar+\delbar\mu=0,\\
\del\delbar+\delbar\del+\mu\c\mu+\c\mu\mu=0,\\
\delbar^2+\c\mu\del+\del\c\mu=0,\\
\c\mu\delbar+\delbar\c\mu=0,\\
\c\mu^2=0.
\end{cases}
\end{equation*}
We also define the operator $d^c:=J^{-1}dJ$. It is a straightforward computation to show that
\[
d^c=i(\mu-\del+\delbar-\c\mu).
\]

Let $(M,J)$ be an almost complex manifold. If the almost complex structure $J$ is induced from a complex manifold structure on $M$, then $J$ is called integrable. Recall that $J$ being integrable is
equivalent to the exterior derivative decomposing into $d=\del+\delbar$.

A Riemannian metric on $M$ for which $J$ is an isometry is called almost Hermitian.
Let $g$ be an almost Hermitian metric, the $2$-form $\omega$ such that
\begin{equation*}
\omega(u,v)=g(Ju,v)\ \ \forall u,v\in\Gamma(TM)
\end{equation*}
is called the fundamental form of $g$. We will call $(M,J,\omega)$ an almost Hermitian manifold.
We denote by $h$ the Hermitian extension of $g$ on the complexified tangent bundle $T_\C M$, and by the same symbol $g$ the $\C$-bilinear symmetric extension of $g$ on $T_\C M$. Also denote by the same symbol $\omega$ the $\C$-bilinear extension of the fundamental form $\omega$ of $g$ on $T_\C M$. 
Thanks to the elementary properties of the two extensions $h$ and $g$, we may want to consider $h$ as a Hermitian operator
$T^{1,0}M\times T^{1,0}M\to\C$ and $g$ as a $\C$-bilinear operator $T^{1,0}M\times T^{0,1}M\to\C$.
Recall that it holds $h(u,v)=g(u,\bar{v})$ for all $u,v\in \Gamma(T^{1,0}M)$.

Let $(M,J,\omega)$ be an almost Hermitian manifold of real dimension $2n$. Extend $h$ on $(p,q)$-forms and denote the Hermitian inner product by $\langle\cdot,\cdot\rangle$.
Let $*:A^{p,q}\lr A^{n-q,n-p}$ the $\C$-linear extension of the standard Hodge $*$ operator on Riemannian manifolds with respect to the volume form $\vol=\frac{\omega^n}{n!}$.
Then the operators
\begin{equation*}
d^*=-*d*,\ \ \ \mu^*=-*\c\mu*,\ \ \ \del^*=-*\delbar*,\ \ \ \delbar^*=-*\del*,\ \ \ \c\mu^*=-*\mu*,
\end{equation*}
are the $L^2$ formal adjoint operators respectively of $d,\mu,\del,\delbar,\c\mu$. Recall that 
\[
\Delta_{d}=dd^*+d^*d
\]
 is the Hodge Laplacian, and, as in the integrable case, set 
\begin{equation*}
\Delta_{\delbar}=\delbar\delbar^*+\delbar^*\delbar,
\end{equation*}
as the $\delbar$, or Dolbeault, Laplacian.

If $M$ is compact, then we easily deduce the following relations
\begin{equation*}
\begin{cases}
\Delta_{d}=0\ &\iff\ d=0,\ d*=0,\\
\Delta_{\delbar}=0\ &\iff\ \delbar=0,\ \del*=0,
\end{cases}
\end{equation*}
which characterize the spaces of harmonic forms
\begin{equation*}
\H^{k}_{d},\ \ \ \H^{p,q}_{\delbar},
\end{equation*}
defined as the spaces of forms which are in the kernel of the associated Laplacian.
These Laplacians are elliptic operators on the almost Hermitian manifold $(M,J,\omega)$, implying that the spaces of harmonic forms are finite dimensional when the manifold is compact. Denote by $\H^{p,q}_d$ the space $\big(\H^{p+q}_d\otimes\C\big)\cap A^{p,q}$, and by
\begin{equation*}
b^{k},\ \ \ h^{p,q}_d,\ \ \ h^{p,q}_{\delbar}
\end{equation*}
respectively the real dimension of $\H^k_d$, which is a topological invariant, and the complex dimensions of $\H^{p,q}_{d}$, $\H^{p,q}_{\delbar}$, which are almost Hermitian invariants.

Let us focus for a moment on real dimension $4$.
Let $(M,g)$ be a compact oriented Riemannian manifold of real dimension 4, and set 
\[
\Lambda^-=\{\alpha\in \Lambda^2M\,:\,*\alpha=-\alpha\}
\]
to be the bundle of anti self dual $2$-forms. 
Denote by $A^-=\Gamma(M,\Lambda^-)$ the space of smooth anti self dual $2$-forms, and by
\[
\H^-=\{\alpha\in A^-\,:\,\Delta_d\alpha=0\},
\]
the subspace of harmonic anti self dual $2$-forms.
Set $b^-=\dim_{\R}\H^-$.
Note that $b^-$ is metric independent: see \cite[Chapter 1]{DK} for its topological meaning.

Let $(M,J,\omega)$ be an almost Hermitian manifold of real dimension $4$. Note that the space of anti self dual complex valued $2$-forms $A^-\otimes\C$ is indeed a subspace of $A^{1,1}$, which will be denoted by $A^-_{\C}$. Furthermore, the space $\H^-\otimes\C$ is indeed a subspace of $\H^{1,1}_d$, and will be denoted by $\H^-_{\C}$.

Recall that, on a given almost Hermitian manifold $(M,J,\omega)$ of dimension $2n$, the almost Hermitian metric is called Gauduchon if $\del\delbar\omega^{n-1}=0$, or equivalently if $dd^c\omega^{n-1}=0$, or equivalently if $d^*\theta=0$, where $\theta$ is the Lee form of $\omega$, uniquely determined by
\[
d\omega^{n-1}=\theta\wedge\omega^{n-1},
\]
thanks to the Lefschetz isomorphism.
Let us recall the following fundamental result by Gauduchon, \cite{Ga}: given a compact almost Hermitian $2n$-manifold $(M,J,\tilde\omega)$, there always exists a Gauduchon metric $\omega=e^t\tilde\omega$ in the conformal class of $\tilde\omega$, with $t\in\cinf(M)$, which is unique up to homothety for $n>1$.

\begin{remark} 
We point out that $h^{1,1}_\delbar$ is a conformal invariant of almost Hermitian metrics on a given almost complex $4$-manifold, see \cite[Lemma 3.1]{TT}.
Therefore, on a given almost Hermitian $4$-manifold $(M,J,\omega)$, in order to compute $h^{1,1}_\delbar(M,J,\omega)$, we can always choose to work with a Gauduchon metric $\tilde\omega$ which is conformal to $\omega$ and calculate $h^{1,1}_\delbar(M,J,\tilde\omega)$, which is equal to $h^{1,1}_\delbar(M,J,\omega)$.
\end{remark}

Finally, we will need a local formula for the Hodge $*$ operator. Let us recall it. Let $(M,J,\omega)$ be an almost Hermitian manifold of real dimension $2n$. Choose a local frame $\beta^1,\dots,\beta^n$ of $A^{1,0}$. We can write locally
\begin{equation*}
\omega=i\sum_{i,j=1}^ng_{i\c{j}}\beta^i\wedge\c{\beta}^j,
\end{equation*}
where $g_{i\c{j}}=\c{g_{j\c{i}}}$.
Let $\phi,\psi\in A^{p,q}$ be locally written as
\begin{equation*}
\phi=\sum\phi_{A_p\c{B}_q}\beta^{A_p\c{B}_q},\ \ \ \psi=\sum\psi_{A_p\c{B}_q}\beta^{A_p\c{B}_q},
\end{equation*}
with
\[
A_p=(a_1,\ldots,a_p),\qquad 
B_q=(b_1,\ldots,b_q)
\]
multi-indices of length $p$, $q$ respectively, such that
$1\le a_1<\cdots <a_p\le n$ and $1\le b_1<\cdots <b_q\le n$.
Given the matrix $(g^{\c{i}j})=(g_{i\c{j}})^{-1}$, we set
\[
\psi^{\c{A}_pB_q}=\sum g^{\c{ a}_1\gamma_1}\cdots g^{\c{ a}_p\gamma_p}
g^{\c{\lambda}_1 b_1}\cdots g^{\c{\lambda}_q b_q}\psi_{\gamma_1\ldots \gamma_p\c{\lambda}_1\ldots \c{\lambda}_q},
\]
such that the local formula for the pointwise Hermitian product $\langle\cdot,\cdot\rangle$ defined on $(p,q)$-forms is given by
\[
\langle\phi,\psi\rangle=\sum\phi_{A_p\c{B}_q}\c{\psi^{\c{A}_pB_q}}.
\]
We consider the volume form
\begin{equation*}
\frac{\omega^n}{n!}=i^n\det(g_{i\c{j}})\beta^{1\c1\dots n\c{n}}=i^n(-1)^\frac{n(n-1)}2\det(g_{i\c{j}})\beta^{1\dots n\c1\dots\c{n}}.
\end{equation*}
Recall that $*:A^{p,q}\to A^{n-q,n-p}$ is defined by the relation
\[
\phi\wedge*\c\psi=\langle\phi,\psi\rangle\frac{\omega^n}{n!}\ \ \ \forall\phi,\psi\in A^{p,q}.
\] 
Therefore, the local formula for $*\c\psi$ is
\begin{equation*}
*\c\psi=i^n(-1)^\frac{n(n-1)}2\det(g_{i\c{j}})\sum\epsilon_{A_p\c{B}_q}\c{\psi^{\c{A}_pB_q}}\beta^{A_n\setminus A_p\c{B}_n\setminus\c{B}_q},
\end{equation*}
where $\epsilon_{A_p\c{B}_q}$ is the sign of the permutation sending $(1,\dots,n,\c1,\dots,\c{n})$ to $(A_p,\c{B}_q,A_n\setminus A_p,\c{B}_n\setminus\c{B}_q)$. Since $*_\omega\c\psi=\c{(*_\omega\psi)}$, it follows
\begin{equation*}
*\psi=i^n(-1)^{n+\frac{n(n-1)}2+(n-p)(n-q)}\det(g_{i\c{j}})\sum\epsilon_{A_p\c{B}_q}{\psi^{\c{A}_pB_q}}\beta^{{B}_n\setminus{B}_q \c{A}_n\setminus \c{A}_p}.
\end{equation*}
Note that $(-1)^{n+\frac{n(n-1)}2+(n-p)(n-q)}=(-1)^{\frac{n(n-1)}2+pq-n(p+q)}$, therefore
\begin{equation}\label{eq-local-hodge-star}
*\psi=i^n(-1)^{\frac{n(n-1)}2+pq-n(p+q)}\det(g_{i\c{j}})\sum\epsilon_{A_p\c{B}_q}{\psi^{\c{A}_pB_q}}\beta^{{B}_n\setminus{B}_q \c{A}_n\setminus \c{A}_p}.
\end{equation}

\section{Invariant structures on compact quotients of Lie groups}\label{sec-lie-groups}
In this section we study compact quotients of Lie groups admitting a left invariant almost Hermitian structure, and focus on real dimension $4$ in order to find a characterization which describes the dimension of the space of Dolbeault harmonic $(1,1)$-forms.

We start by recalling the following fundamental result about compact quotients of Lie groups due to Milnor.

\begin{lemma}[{\cite[Lemma 6.2]{milnor}}]
If the Lie group $G$ admits a discrete subgroup $\Gamma$ with compact quotient, then $G$ admits a bi-invariant volume form $\nu$.
\end{lemma}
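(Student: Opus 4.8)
The plan is to reduce the claim to the assertion that $G$ is \emph{unimodular}, and then to extract unimodularity from the existence of the compact quotient by an integration argument. Since $\Lambda^{\dim G}\g^*$ is one dimensional, any nonzero element of it determines, by left translation, a left invariant volume form $\nu$ on $G$; such a $\nu$ always exists. For each $g\in G$ the right translation $R_g\colon x\mapsto xg$ satisfies $R_g^*\nu=\Delta(g)\,\nu$ for a unique $\Delta(g)\in\R^{>0}$, because $R_g^*\nu$ is again left invariant (right and left translations commute) and the space of left invariant volume forms is a real line. Using $R_{gh}=R_h\circ R_g$ one checks that $\Delta\colon G\to\R^{>0}$ is a group homomorphism, the modular function of $G$; the form $\nu$ is bi-invariant precisely when $\Delta\equiv 1$, so it suffices to prove this.

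Next I would push $\nu$ down to $M=\Gamma\backslash G$. As $\nu$ is invariant under all left translations, in particular under $L_\gamma$ for $\gamma\in\Gamma$, and as the projection $\pi\colon G\to M$ is a covering map, $\nu$ descends to a well defined volume form $\bar\nu$ on $M$; since $M$ is compact we have $0<\int_M\bar\nu<\infty$. Because right translations commute with left translations, each $R_g$ carries the coset $\Gamma x$ to $\Gamma(xg)$ and hence induces a diffeomorphism $\bar R_g\colon M\to M$ with $\pi\circ R_g=\bar R_g\circ\pi$, from which $\bar R_g^*\bar\nu=\Delta(g)\,\bar\nu$ follows. Integrating this identity over $M$ and invoking the change of variables formula gives $\int_M\bar R_g^*\bar\nu=\int_M\bar\nu$ as soon as $\bar R_g$ preserves orientation, whence $\Delta(g)\int_M\bar\nu=\int_M\bar\nu$ and therefore $\Delta(g)=1$.

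The single point that needs care — and the only genuine obstacle — is the orientation-preserving claim. Here I would use that $G$ is connected: choosing a path $g_t$ from $e$ to $g$ produces an isotopy $\bar R_{g_t}$ from $\mathrm{id}_M$ to $\bar R_g$, so $\bar R_g$ is isotopic to the identity and in particular orientation preserving, which is exactly what justifies the change of variables above. Once $\Delta\equiv 1$ is established, $R_g^*\nu=\nu$ for every $g$, so $\nu$ is simultaneously left and right invariant, i.e. bi-invariant, and the proof is complete.
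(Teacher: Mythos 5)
The paper does not prove this lemma at all---it simply cites Milnor---so there is no internal proof to compare against; your argument is correct and complete, and it is essentially Milnor's own: reduce bi-invariance to triviality of the modular homomorphism, push the left invariant volume form down to the compact quotient $\Gamma\backslash G$, and combine the finiteness of $\int_M\bar\nu$ with $\bar R_g^*\bar\nu=\Delta(g)\,\bar\nu$ to force $\Delta\equiv1$. You correctly identified the orientation question as the only delicate point and the isotopy argument via connectedness of $G$ settles it; the same connectedness observation also justifies, a priori, that the scalar $c(g)$ defined by $R_g^*\nu=c(g)\,\nu$ is positive, since $c$ is continuous, nowhere zero, and $c(e)=1$.
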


Now, let us introduce the symmetrization process following \cite[Theorem 2.1]{fino-grantcharov}, \cite[Theorem 7]{belgun} and \cite[p. 192]{ugarte}.

Let $G$ be a Lie group and let $\Gamma$ be a discrete subgroup such that $M=\Gamma\backslash G$ is compact. Let $\nu$ be a volume form on $M$ induced by a bi-invariant volume form on $G$. Denote by $\mathfrak{g}$ the Lie algebra of $G$. Given any covariant $k$-tensor field $T:\Gamma(M,TM)^k\to\cinf(M)$ on $M$, we define a covariant $k$-tensor field $T_\nu:\mathfrak{g}^k\to\R$ on $\mathfrak{g}$ by
\[
T_\nu(X_1,\dots,X_k)=\frac1{\nu(M)}\int_MT(X_1,\dots,X_k)\nu,\ \ \ \forall X_1,\dots,X_k\in\mathfrak{g},
\]
where $\nu(M)=\int_M\nu$ denotes the volume of $M$ with respect to the measure induced by $\nu$. Note that, if $T$ is left invariant, then $T_\nu=T$. The symmetrization operator has the following fundamental property. We recall the proof here for completeness.

\begin{theorem}[{\cite[Theorem 2.1]{fino-grantcharov}, cf. \cite[Theorem 7]{belgun}}]\label{thm-belgun}
Let $G$ be an $n$-dimensional Lie group and let $\Gamma$ be a discrete subgroup such that $M=\Gamma\backslash G$ is compact. Let $\nu$ be a volume form on $M$ induced by a bi-invariant volume form on $G$. If $\alpha\in A^k(M)$, then $(d\alpha)_\nu=d(\alpha_\nu)$.
\end{theorem}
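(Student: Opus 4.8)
The plan is to prove the identity $(d\alpha)_\nu = d(\alpha_\nu)$ by reducing it to a statement about the commutation of exterior differentiation with integration against the bi-invariant volume form $\nu$. Since both sides are left invariant $(k+1)$-forms on $\mathfrak{g}$, it suffices to check that they agree when evaluated on an arbitrary $(k+1)$-tuple $X_0, \dots, X_k \in \mathfrak{g}$ of left invariant vector fields. The natural tool is the intrinsic (Cartan) formula for the exterior derivative,
\[
d\beta(X_0,\dots,X_k)=\sum_{i}(-1)^i X_i\,\beta(X_0,\dots,\widehat{X_i},\dots,X_k)+\sum_{i<j}(-1)^{i+j}\beta([X_i,X_j],X_0,\dots,\widehat{X_i},\dots,\widehat{X_j},\dots,X_k),
\]
applied to $\beta=\alpha_\nu$. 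The key point is that because the $X_i$ are left invariant and $\nu$ is bi-invariant, the differential operators $X_i$ and the brackets $[X_i,X_j]$ interact cleanly with the averaging integral.

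First I would write out $d(\alpha_\nu)(X_0,\dots,X_k)$ using the Cartan formula above. The second (bracket) sum involves only the values of $\alpha_\nu$ on tuples obtained by inserting $[X_i,X_j]$, and since $[X_i,X_j]\in\mathfrak{g}$ and $\alpha_\nu$ is by definition the integral average of $\alpha$, this sum immediately equals $\frac{1}{\nu(M)}\int_M$ of the corresponding bracket sum for $\alpha$. The first sum is the delicate one: it contains terms $X_i\big(\alpha_\nu(\dots)\big)$, where $X_i$ differentiates a \emph{constant} (the averaged value $\alpha_\nu(\dots)$ is a real number, not a function on $M$), so each such term vanishes. Meanwhile, in the Cartan expansion of $(d\alpha)_\nu(X_0,\dots,X_k)=\frac{1}{\nu(M)}\int_M d\alpha(X_0,\dots,X_k)\,\nu$, the analogous first sum produces terms $\frac{1}{\nu(M)}\int_M X_i\big(\alpha(X_0,\dots,\widehat{X_i},\dots,X_k)\big)\,\nu$.

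The crux of the argument, therefore, is to show that each of these integrals vanishes, i.e. that $\int_M X(f)\,\nu = 0$ for every left invariant vector field $X\in\mathfrak{g}$ and every smooth function $f\in\cinf(M)$. This is where bi-invariance of $\nu$ enters: the Lie derivative satisfies $\mathcal{L}_X(f\nu)=X(f)\,\nu+f\,\mathcal{L}_X\nu$, and bi-invariance of $\nu$ gives $\mathcal{L}_X\nu=0$, so $X(f)\,\nu=\mathcal{L}_X(f\nu)=d\,\iota_X(f\nu)$ is exact; by Stokes' theorem on the closed manifold $M$ its integral vanishes. I expect this vanishing lemma to be the main obstacle, as it is the only place requiring genuine input beyond formal manipulation — specifically the bi-invariance $\mathcal{L}_X\nu=0$ and compactness of $M$.

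Putting these pieces together, the bracket sums on both sides coincide term by term after averaging, while the derivative sums vanish on both sides: on the $d(\alpha_\nu)$ side because each summand differentiates a constant, and on the $(d\alpha)_\nu$ side by the vanishing lemma. Hence $d(\alpha_\nu)(X_0,\dots,X_k)=(d\alpha)_\nu(X_0,\dots,X_k)$ for all left invariant $X_0,\dots,X_k$, and since both are left invariant forms determined by their values on $\mathfrak{g}$, we conclude $d(\alpha_\nu)=(d\alpha)_\nu$, as desired.
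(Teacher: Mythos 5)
Your proposal is correct and follows essentially the same route as the paper: reduce to the Cartan formula evaluated on left invariant vector fields, observe that the derivative terms on the $d(\alpha_\nu)$ side differentiate constants and hence vanish, and kill the corresponding terms on the $(d\alpha)_\nu$ side via the lemma $\int_M X(f)\,\nu=0$, proved exactly as you do with $\mathcal{L}_X\nu=0$ (the paper invokes right invariance of $\nu$, which your bi-invariance hypothesis supplies), Cartan's magic formula, and Stokes' theorem. No substantive difference.
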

\begin{proof}
Without loss of generality we may assume, rescaling the volume form, that $\nu(M)=1$.
Let us consider first the case of smooth functions. Let $f\in\cinf(M)$.
Then $f_\nu\in\R$ and $df_\nu=0$. On the other hand, for any $X\in\mathfrak{g}$, we have
\[
(df)_\nu(X)=\int_MX(f)\nu=\int_M\mathcal{L}_X(f)\nu=\int_M\mathcal{L}_X(f\nu),
\]
where the last equality follows since $\nu$ is right invariant and so it is invariant under the local flow of $X\in\mathfrak{g}$, which implies $\mathcal{L}_X\nu=0$. Now, by Cartan's formula, we derive
\[
\int_M\mathcal{L}_X(f\nu)=\int_Mi_X(d(f\nu))+\int_Md(i_X(f\nu))=0,
\]
where the first term in the above sum vanishes as $f\nu\in A^n(M)$ and so $d(f\nu)=0$, and the second term vanishes by the Stoke's theorem. Thus also $(df)_\nu=0$.

Now, recall the formula for the exterior derivative of $\alpha$. For any $X_0,\dots,X_k\in\mathfrak{g}$, we have
\begin{align*}
d\alpha(X_0,\dots,X_k)&=\sum_{1\le i\le n}(-1)^{i}X_i\big(\alpha(X_0,\dots,\hat{X_i},\dots,X_k)\big)+\\
&+\sum_{1\le i<j\le n}(-1)^{i+j}\alpha([X_i,X_j],X_0,\dots,\hat{X_i},\dots,\hat{X_j},\dots,X_k).
\end{align*}
Therefore,
\begin{align*}
d\alpha_\nu(X_0,\dots,X_k)&=\sum_{1\le i\le n}(-1)^{i}X_i\big(\int_M\alpha(X_0,\dots,\hat{X_i},\dots,X_k)\nu\big)+\\
&+\sum_{1\le i<j\le n}(-1)^{i+j}\int_M\alpha([X_i,X_j],X_0,\dots,\hat{X_i},\dots,\hat{X_j},\dots,X_k)\nu\\
&=\int_M\sum_{1\le i<j\le n}(-1)^{i+j}\alpha([X_i,X_j],X_0,\dots,\hat{X_i},\dots,\hat{X_j},\dots,X_k)\nu\\
&=\int_Md\alpha(X_0,\dots,X_k)\nu+\\
&-\int_M\sum_{1\le i\le n}(-1)^{i}X_i\big(\alpha(X_0,\dots,\hat{X_i},\dots,X_k)\big)\nu\\
&=\int_Md\alpha(X_0,\dots,X_k)\nu\\
&=(d\alpha)_\nu(X_0,\dots,X_k),
\end{align*}
where the penultimate equality follows from the case of smooth functions that we have already treated.
\end{proof}

Now, assume that $G$ admits a left invariant almost complex structure $J$, which descends to the quotient $M$. We observe that if $\psi\in A^{p,q}(M,J)$, then $\psi_\nu\in A^{p,q}(G,J)$ and it is left invariant.  We easily deduce the following lemma.

\begin{lemma}\label{lem-lie-group-almost-cplx}
Let $G$ be a $2n$-dimensional Lie group and let $\Gamma$ be a discrete subgroup such that $M=\Gamma\backslash G$ is compact. Let $\nu$ be a volume form on $M$ induced by a bi-invariant volume form on $G$. Assume that $J$ is a left invariant almost complex structure on $G$. Then for any $\psi\in A^{p,q}(M,J)$ we have
\[
(\mu\psi)_\nu=\mu\psi_\nu,\ \ \ (\del\psi)_\nu=\del\psi_\nu,\ \ \ (\delbar\psi)_\nu=\delbar\psi_\nu,\ \ \ (\mubar\psi)_\nu=\mubar\psi_\nu.
\]
\end{lemma}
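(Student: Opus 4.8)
<br>

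The plan is to leverage Theorem \ref{thm-belgun}, which already establishes that symmetrization commutes with the full exterior differential $d$, and then to upgrade this to a statement about each of the four components $\mu,\del,\delbar,\mubar$ separately. The key observation is that symmetrization preserves bidegree: if $\psi\in A^{p,q}(M,J)$, then because $J$ is left invariant, the decomposition $\Lambda^k_\C M=\bigoplus_{p+q=k}\Lambda^{p,q}M$ is also left invariant, and hence $\psi_\nu\in A^{p,q}(G,J)$ remains of pure type $(p,q)$. This is the content of the remark preceding the lemma, and it is the crucial input that lets us separate the components of $d$.

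First I would fix $\psi\in A^{p,q}(M,J)$ and apply Theorem \ref{thm-belgun} to get $(d\psi)_\nu=d(\psi_\nu)$. Now $d\psi$ decomposes according to bidegree as
\[
d\psi=\mu\psi+\del\psi+\delbar\psi+\mubar\psi,
\]
with the four summands lying in $A^{p+2,q-1}$, $A^{p+1,q}$, $A^{p,q+1}$, $A^{p-1,q+2}$ respectively. Since symmetrization is linear and, as noted above, maps $A^{r,s}(M,J)$ into $A^{r,s}(G,J)$ preserving each bidegree, applying $(\cdot)_\nu$ to this sum yields
\[
(d\psi)_\nu=(\mu\psi)_\nu+(\del\psi)_\nu+(\delbar\psi)_\nu+(\mubar\psi)_\nu,
\]
where the four terms on the right are of pure bidegrees $(p+2,q-1)$, $(p+1,q)$, $(p,q+1)$, $(p-1,q+2)$ respectively. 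On the other hand, since $\psi_\nu\in A^{p,q}(G,J)$, its differential decomposes as
\[
d(\psi_\nu)=\mu\psi_\nu+\del\psi_\nu+\delbar\psi_\nu+\mubar\psi_\nu,
\]
again with summands of the same four distinct bidegrees.

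The final step is simply to equate these two decompositions of the same form $(d\psi)_\nu=d(\psi_\nu)$ and match bidegrees. Because the four target bidegrees $(p+2,q-1)$, $(p+1,q)$, $(p,q+1)$, $(p-1,q+2)$ are pairwise distinct, the decomposition of a form into its pure-type components is unique, and so the components must agree term by term. This gives exactly the four claimed identities $(\mu\psi)_\nu=\mu\psi_\nu$, $(\del\psi)_\nu=\del\psi_\nu$, $(\delbar\psi)_\nu=\delbar\psi_\nu$, $(\mubar\psi)_\nu=\mubar\psi_\nu$. I do not anticipate a genuine obstacle here; the only point requiring care is verifying the bidegree-preservation of symmetrization, which rests on the left invariance of $J$ and hence of the type decomposition of complexified forms. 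The rest is a formal consequence of the uniqueness of the type decomposition, so the argument is short and routine once Theorem \ref{thm-belgun} is in hand.
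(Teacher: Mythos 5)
Your argument is correct and coincides with the paper's own proof: both apply Theorem \ref{thm-belgun} to get $(d\psi)_\nu = d(\psi_\nu)$, observe that symmetrization preserves bidegree because $J$ is left invariant, and then match the pure-type components of the two decompositions. No gaps; the proposal is essentially identical to what is in the paper.
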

\begin{proof}
The exterior derivative on $(p,q)$-forms decomposes into the direct sum $d=\mu+\del+\delbar+\mubar$, therefore
\begin{align*}
\mu\psi_\nu+\del\psi_\nu+\delbar\psi_\nu+\mubar\psi_\nu&=d\psi_\nu\\
&=(d\psi)_\nu\\
&=(\mu\psi+\del\psi+\delbar\psi+\mubar\psi)_\nu\\
&=(\mu\psi)_\nu+(\del\psi)_\nu+(\delbar\psi)_\nu+(\mubar\psi)_\nu,
\end{align*}
and the thesis follows because the symmetrization process behaves well with respect to the bidegree decomposition of forms.
\end{proof}

Let us now endow $(G,J)$ with a left invariant almost Hermitian metric $\omega$, which also descends to the quotient $M$. It turns out that the symmetrization process behaves well with the Hodge $*$ operator, here indicated by $*_\omega$.

\begin{lemma}\label{lem-lie-group-almost-herm}
Let $G$ be a $2n$-dimensional Lie group and let $\Gamma$ be a discrete subgroup such that $M=\Gamma\backslash G$ is compact. Let $\nu$ be a volume form on $M$ induced by a bi-invariant volume form on $G$. Assume that $(J,\omega)$ is a left invariant almost Hermitian structure on $G$. Then, for any $\psi\in A^{p,q}(M,J)$, we have
\[
(*_\omega\psi)_\nu=*_\omega\psi_\nu.
\]
\end{lemma}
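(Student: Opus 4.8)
The plan is to exploit the fact that, in contrast with the exterior differential treated in Theorem \ref{thm-belgun}, the Hodge star $*_\omega$ is a pointwise (order zero) operator, so that no use of Stokes' theorem or Cartan's formula will be needed; the whole statement should reduce to the $\C$-linearity of the integral defining the symmetrization, together with the left invariance of $\omega$ and $J$.

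First I would fix a left invariant coframe $\beta^1,\dots,\beta^n$ of $A^{1,0}(G,J)$, which exists because $J$ is left invariant. Since $\omega$ is left invariant as well, writing $\omega=i\sum_{i,j}g_{i\c j}\beta^i\wedge\c\beta^j$ the coefficients $g_{i\c j}$ are constants, and hence so are the entries $g^{\c i j}$ of the inverse matrix and the scalar $i^n(-1)^{n(n-1)/2}\det(g_{i\c j})$. Given $\psi\in A^{p,q}(M,J)$, expand $\psi=\sum \psi_{A_p\c B_q}\beta^{A_p\c B_q}$ with coefficients $\psi_{A_p\c B_q}\in\cinf(M,\C)$. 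Inspecting the local formula \eqref{eq-local-hodge-star}, the components of $*_\omega\psi$ in the coframe $\beta^{B_n\setminus B_q\,\c A_n\setminus\c A_p}$ are obtained from the $\psi_{A_p\c B_q}$ by the index raising $\psi\mapsto\psi^{\c A_p B_q}$ and multiplication by the signs $\epsilon_{A_p\c B_q}$ and by the constant $i^n(-1)^{n(n-1)/2+pq}\det(g_{i\c j})$. Because all of these are constant, each component of $*_\omega\psi$ is a $\C$-linear combination, with constant coefficients, of the functions $\psi_{A_p\c B_q}$.

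Next I would recall, as in the proof of Lemma \ref{lem-lie-group-almost-cplx}, that symmetrization respects the bidegree decomposition and acts in the invariant coframe simply by averaging the coefficient functions: the symmetrization $\psi_\nu$ has constant components $(\psi_{A_p\c B_q})_\nu=\frac1{\nu(M)}\int_M\psi_{A_p\c B_q}\,\nu$. Since the index raising uses only the constant coefficients $g^{\c i j}$, it commutes with this averaging, that is $(\psi^{\c A_p B_q})_\nu=(\psi_\nu)^{\c A_p B_q}$. Applying the symmetrization to $*_\omega\psi$ and pulling the constant factors out of the integral then yields exactly the expression for $*_\omega\psi_\nu$ produced by \eqref{eq-local-hodge-star} applied to $\psi_\nu$, whence $(*_\omega\psi)_\nu=*_\omega\psi_\nu$.

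The only point requiring care, and the step I expect to be the main, albeit mild, obstacle, is the bookkeeping that identifies symmetrization with coefficientwise averaging in the invariant coframe, together with the verification that the metric contraction commutes with this averaging; both hinge crucially on the left invariance of $\omega$, which forces $g_{i\c j}$ to be constant. Equivalently, and perhaps more transparently, one may observe that $*_\omega$ is a left invariant bundle map $\Lambda^{p,q}\to\Lambda^{n-q,n-p}$, hence is given by a constant matrix in the invariant coframe, and any such constant-coefficient linear bundle map automatically commutes with the symmetrization.
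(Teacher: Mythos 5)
Your proposal is correct and follows essentially the same route as the paper: the paper's proof also fixes a global left invariant $(1,0)$-coframe, invokes the local formula \eqref{eq-local-hodge-star}, observes that left invariance of $\omega$ makes $g_{i\c j}$ and $g^{\c ij}$ constants, and then commutes the averaging integral with these constant-coefficient operations. Your closing remark that $*_\omega$ is a constant-matrix bundle map in the invariant coframe is just a compact restatement of that same computation.
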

\begin{proof}
Without loss of generality we may assume, rescaling the volume form, that $\nu(M)=1$.
Denote by $*_\omega:A^{p,q}(G,J)\to A^{n-q,n-p}(G,J)$ the Hodge $*$ operator on $(G,J,\omega)$ with respect to the volume form $\frac{\omega^n}{n!}$. Choose $\beta^1,\dots,\beta^n$ to be a global coframe of left invariant $(1,0)$-forms on $(G,J)$. 
With the same notation as Section \ref{sec-preliminaries}, for any $\psi\in A^{p,q}(M,J)$, recall the formula \eqref{eq-local-hodge-star} for $*_\omega\psi$ with respect to the global coframe $\beta^1,\dots,\beta^n$:
\begin{equation*}
*_\omega\psi=i^n(-1)^{\frac{n(n-1)}2+pq-n(p+q)}\det(g_{i\c{j}})\sum\epsilon_{A_p\c{B}_q}{\psi^{\c{A}_pB_q}}\beta^{{B}_n\setminus{B}_q \c{A}_n\setminus \c{A}_p}.
\end{equation*}
In this case, since $\omega$ is left invariant, note that $g_{i\c{j}}$ and $g^{\c{i}j}$ are complex constants. Therefore,
\begin{align*}
(*_\omega\psi)_\nu&=\int_M \left(i^n(-1)^{\frac{n(n-1)}2+pq-n(p+q)}\det(g_{i\c{j}})\sum\epsilon_{A_p\c{B}_q}{\psi^{\c{A}_pB_q}}\beta^{{B}_n\setminus{B}_q \c{A}_n\setminus \c{A}_p} \right) \nu\\
&=i^n(-1)^{\frac{n(n-1)}2+pq-n(p+q)}\det(g_{i\c{j}})\sum\epsilon_{A_p\c{B}_q}\left(\int_M{\psi^{\c{A}_pB_q}}\nu \right)\beta^{{B}_n\setminus{B}_q \c{A}_n\setminus \c{A}_p},
\end{align*}
and since
\begin{align*}
\psi_\nu&=\int_M \left( \sum\psi_{A_p\c{B}_q}\beta^{A_p\c{B}_q} \right) \nu\\
&=\sum\left(\int_M \psi_{A_p\c{B}_q} \nu\right)\beta^{A_p\c{B}_q},
\end{align*}
it also follows that
\begin{align*}
*_\omega\psi_\nu&=i^n(-1)^{\frac{n(n-1)}2+pq-n(p+q)}\det(g_{i\c{j}})\sum\epsilon_{A_p\c{B}_q}{\left(\psi_\nu\right)^{\c{A}_pB_q}}\beta^{{B}_n\setminus{B}_q \c{A}_n\setminus \c{A}_p}\\
&=i^n(-1)^{\frac{n(n-1)}2+pq-n(p+q)}\det(g_{i\c{j}})\sum\epsilon_{A_p\c{B}_q}\left(\int_M{\psi^{\c{A}_pB_q}}\nu \right)\beta^{{B}_n\setminus{B}_q \c{A}_n\setminus \c{A}_p}.
\end{align*} 
This ends the proof.
\end{proof}
We remark that, in order to prove the relation $(*_\omega\psi)_\nu=*_\omega\psi_\nu$, it is fundamental to assume that the almost Hermitian metric $\omega$ is left invariant on $G$, so that $g_{i\c{j}}$ and $g^{\c{i}j}$ are complex constants.

With an analogous proof, we note that Lemma \ref{lem-lie-group-almost-herm} can be generalized as follows.
\begin{proposition}
Let $G$ be a Lie group and let $\Gamma$ be a discrete subgroup such that $M=\Gamma\backslash G$ is compact. Let $\nu$ be a volume form on $M$ induced by a bi-invariant volume form on $G$. Let $g$ be a left invariant Riemannian metric on $G$. Then, for any $\psi\in A^k(M)$, we have
\[
(*_g\psi)_\nu=*_g\psi_\nu.
\]
\end{proposition}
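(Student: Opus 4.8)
The plan is to run the proof of Lemma~\ref{lem-lie-group-almost-herm} verbatim, merely replacing the left invariant $(1,0)$-coframe by a left invariant real coframe and the Hermitian Hodge $*$ by the Riemannian one. As before, rescale so that $\nu(M)=1$. First I would fix a global coframe $e^1,\dots,e^n$ of left invariant $1$-forms on $G$; such a coframe exists since left invariant $1$-forms trivialize $T^*G$. Since $g$ is left invariant, the components $g_{ij}=g(e_i,e_j)$, the inverse $(g^{ij})=(g_{ij})^{-1}$, and $\det(g_{ij})$ are all \emph{real constants}. This is the one and only place where left invariance of $g$ enters, exactly as the left invariance of $\omega$ was the crucial hypothesis in Lemma~\ref{lem-lie-group-almost-herm}.

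Next I would record the local formula for $*_g$ in this coframe, the real analogue of \eqref{eq-local-hodge-star}. Writing $\psi=\sum_{|I|=k}\psi_I\,e^I$ over increasing multi-indices $I=(i_1<\cdots<i_k)$ and raising indices by
\[
\psi^{I}=\sum g^{i_1\gamma_1}\cdots g^{i_k\gamma_k}\,\psi_{\gamma_1\dots\gamma_k},
\]
one obtains
\[
*_g\psi=\sqrt{\det(g_{ij})}\,\sum_{|I|=k}\epsilon_I\,\psi^{I}\,e^{I^c},
\]
where $I^c$ denotes the complementary increasing multi-index and $\epsilon_I$ is the sign of the permutation sending $(1,\dots,n)$ to $(I,I^c)$. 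The structural point, identical to the Hermitian case, is that $*_g$ acts on the coefficient functions $\psi_I$ purely algebraically: no derivatives appear, and every scalar factor standing in front of them is a constant.

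Finally I would symmetrize and commute the integral past these constants. Because $e^1,\dots,e^n$ is left invariant, symmetrization simply integrates the coefficient functions, giving $\psi_\nu=\sum_{|I|=k}\big(\int_M\psi_I\,\nu\big)\,e^I$. Since the $g^{ij}$ are constants,
\[
\int_M\psi^{I}\,\nu=\sum g^{i_1\gamma_1}\cdots g^{i_k\gamma_k}\int_M\psi_{\gamma_1\dots\gamma_k}\,\nu=(\psi_\nu)^{I},
\]
and therefore
\[
(*_g\psi)_\nu=\sqrt{\det(g_{ij})}\,\sum_{|I|=k}\epsilon_I\,(\psi_\nu)^{I}\,e^{I^c}=*_g\psi_\nu.
\]

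There is essentially no obstacle here: $*_g$ is a zeroth-order (pointwise $\R$-linear) operator whose coefficients are constant in a left invariant frame, and integration against $\nu$ is linear, so the two operations commute termwise. The only things to verify carefully are the constancy of the metric coefficients in the chosen coframe — which is precisely the content of left invariance of $g$ — and the bookkeeping of the sign $\epsilon_I$ and the complementary index $I^c$, neither of which interacts with the integration.
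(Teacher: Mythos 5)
Your proposal is correct and is exactly the argument the paper intends: the paper proves this Proposition by simply remarking that it follows ``with an analogue proof'' from Lemma \ref{lem-lie-group-almost-herm}, namely by writing the Riemannian Hodge star in a left invariant coframe, observing that the metric coefficients and $\det(g_{ij})$ are constants, and commuting the integration against $\nu$ past these constants. Your write-up fills in precisely those details, so it matches the paper's approach.
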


We are now able to prove the following main theorem.
\begin{theorem}\label{thm-main}
Let $G$ be a $4$-dimensional Lie group and let $\Gamma$ be a discrete subgroup such that $M=\Gamma\backslash G$ is compact. Assume that $(J,\omega)$ is a left invariant almost Hermitian structure on $G$. Then, $h^{1,1}_\delbar(M,J,\omega)=b^-+1$ if and only if there exists a left invariant anti self dual $(1,1)$-form $\gamma\in A^{1,1}(G,J)$ satisfying
\[
id^c\gamma=d\omega.
\]
Otherwise, $h^{1,1}_\delbar(M,J,\omega)=b^-$.
\end{theorem}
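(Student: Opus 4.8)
The plan is to deduce Theorem \ref{thm-main} from the already-established Theorem \ref{thm-dol-11-dim-char} by using symmetrization to convert the existence of an arbitrary anti self dual $(1,1)$-form $\gamma$ solving $id^c\gamma=d\omega$ into the existence of a \emph{left invariant} such form. First I would reduce to the Gauduchon case: since $h^{1,1}_\delbar$ is a conformal invariant, and since every conformal class contains a Gauduchon representative, I may as well assume $\omega$ itself is Gauduchon. But I must be careful here, because I want to preserve left invariance; so I would instead observe that a left invariant almost Hermitian metric on a compact quotient of a Lie group is automatically Gauduchon. This follows because $\del\delbar\omega^{n-1}$ (here $n=2$, so $\del\delbar\omega$) is a left invariant top-degree-type form whose symmetrization equals itself, and on a unimodular group admitting a compact quotient the relevant integration-by-parts forces such invariant exact-type expressions to vanish; more concretely, in real dimension $4$ one checks directly that a left invariant $\omega$ satisfies $\del\delbar\omega=0$ because $\del\delbar\omega\in A^{2,2}(G,J)$ is a left invariant $4$-form that must be a multiple of the volume form, and symmetrization combined with Theorem \ref{thm-belgun} shows it is the differential of an invariant form, hence integrates to zero, hence vanishes. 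Thus Theorem \ref{thm-dol-11-dim-char} applies directly to our $\omega$.

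With $\omega$ Gauduchon, Theorem \ref{thm-dol-11-dim-char} tells me that $h^{1,1}_\delbar(M,J,\omega)=b^-+1$ precisely when there exists some (a priori non-invariant) anti self dual $\gamma\in A^{1,1}(M,J)$ with $id^c\gamma=d\omega$, and otherwise $h^{1,1}_\delbar=b^-$. So the entire content of the theorem is the equivalence
\[
\bigl(\exists\,\gamma\in A^{1,1}(M,J),\ *\gamma=-\gamma,\ id^c\gamma=d\omega\bigr)
\iff
\bigl(\exists\,\gamma\in A^{1,1}(G,J)\ \text{left invariant},\ *\gamma=-\gamma,\ id^c\gamma=d\omega\bigr).
\]
The backward implication is trivial, since a left invariant form is in particular a form on $M$. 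The forward implication is where symmetrization enters and is the crux of the argument.

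For the forward direction, suppose $\gamma\in A^{1,1}(M,J)$ is anti self dual and satisfies $id^c\gamma=d\omega$. I would apply the symmetrization operator and show $\gamma_\nu$ is again a solution of the same system, now left invariant. By the remark preceding Lemma \ref{lem-lie-group-almost-cplx}, $\gamma_\nu\in A^{1,1}(G,J)$ is left invariant. By Lemma \ref{lem-lie-group-almost-herm}, $(*_\omega\gamma)_\nu=*_\omega\gamma_\nu$, so from $*\gamma=-\gamma$ I get $*\gamma_\nu=(*\gamma)_\nu=(-\gamma)_\nu=-\gamma_\nu$, i.e.\ $\gamma_\nu$ is anti self dual. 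Finally I must commute symmetrization with $d^c$: since $d^c=i(\mubar+\delbar-\del-\mu)$ and Lemma \ref{lem-lie-group-almost-cplx} shows symmetrization commutes with each of $\mu,\del,\delbar,\mubar$, I obtain $(d^c\gamma)_\nu=d^c\gamma_\nu$. Because $\omega$ is left invariant, $d\omega$ is left invariant so $(d\omega)_\nu=d\omega$; applying symmetrization to $id^c\gamma=d\omega$ then yields $id^c\gamma_\nu=d\omega$. Hence $\gamma_\nu$ is the desired left invariant solution.

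The main obstacle, and the step I would check most carefully, is establishing that the left invariant $\omega$ is Gauduchon so that Theorem \ref{thm-dol-11-dim-char} is applicable without losing invariance; the conformal-invariance remark is tempting but a Gauduchon rescaling need not be invariant, so the cleaner route is the direct vanishing of $\del\delbar\omega$ for invariant $\omega$ via Theorem \ref{thm-belgun} and the bi-invariant volume form. A secondary point requiring attention is the precise sign and bidegree bookkeeping when commuting $d^c$ with symmetrization, but this reduces entirely to Lemma \ref{lem-lie-group-almost-cplx} and the explicit formula $d^c=i(\mubar+\delbar-\del-\mu)$ recorded in the preliminaries, so no genuinely new computation is needed.
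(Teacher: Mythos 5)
Your proposal is correct and follows essentially the same route as the paper: establish that a left invariant metric is automatically Gauduchon, apply Theorem \ref{thm-dol-11-dim-char}, and symmetrize the resulting $\gamma$ using Lemmas \ref{lem-lie-group-almost-cplx} and \ref{lem-lie-group-almost-herm} to produce a left invariant solution. The only (cosmetic) difference is in the Gauduchon step, where the paper integrates $d^{*_\omega}\theta$ for the Lee form $\theta$ while you argue directly that the invariant top form $\del\delbar\omega$ is exact and hence vanishes; both are valid.
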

\begin{proof}
First of all, recall that any left invariant almost Hermitian metric is Gauduchon. We recall here the proof for completeness. Indeed, if $\theta$ is its Lee form, then $d^{*_\omega}\theta$ is a left invariant function, i.e., a constant, and
\[
\int_Md^{*_\omega}\theta\frac{\omega^n}{n!}=0
\]
since $d^{*_\omega}$ is the $L^2$ formal adjoint of $d$, impying $d^{*_\omega}\theta=0$.

Therefore, if such a $\gamma$ exists, then the thesis follows from Theorem \ref{thm-dol-11-dim-char}.
Conversely, if $h^{1,1}_\delbar(M,J,\omega)=b^-+1$, then by Theorem \ref{thm-dol-11-dim-char} there exists an anti self dual $(1,1)$-form $\gamma\in A^{1,1}(M,J)$ satisfying $id^c\gamma=d\omega$. Let $\nu$ be a volume form on $M$ induced by a bi-invariant volume form on $G$ and let us consider the left invariant form $\gamma_\nu\in A^{1,1}(G,J)$. By Lemma \ref{lem-lie-group-almost-cplx}, we derive
\[
id^c\gamma_\nu=(id^c\gamma)_\nu=(d\omega)_\nu=d\omega,
\]
since $\omega$, and thus $d\omega$, are left invariant on $G$. Moreover, by Lemma \ref{lem-lie-group-almost-herm},
\[
*_\omega\gamma_\nu=(*_\omega\gamma)_\nu=(-\gamma)_\nu=-\gamma_\nu,
\]
that is, $\gamma_\nu$ is anti self dual. This concludes the proof.
\end{proof}

To apply Theorem \ref{thm-main} in explicit examples, we will need the following

\begin{lemma}\label{lem-gamma}
Let $G$ be a $4$-dimensional Lie group and let $\Gamma$ be a discrete subgroup such that $M=\Gamma\backslash G$ is compact. Assume that $(J,\omega)$ is a left invariant almost Hermitian structure on $G$.
Let $\phi^1,\phi^2$ be a global coframe of left invariant $(1,0)$-forms on $(G,J)$. The fundamental form $\omega$ is written as
\begin{equation}\label{eq-omega}
\omega=ir^2\phi^{1\c1}+is^2\phi^{2\c2}+u\phi^{1\c2}-\c{u}\phi^{2\c1},
\end{equation}
with $r,s>0$ and $u\in\C$ such that $rs>|u|$. Set $\tau=\sqrt{r^2s^2-|u|^2}\ne0$. Then, $\gamma$ is a left invariant anti self dual $(1,1)$-form on $(G,J)$ iff it can be written as
\begin{align}\label{eq-gamma-1}
\gamma&=Ar^2\phi^{1\c1}+\frac1{r^2}\big(A(2|u|^2-r^2s^2)+i\tau(B\c{u}-Cu)\big)\phi^{2\c2}+\\
&+\big(-iAu+B\tau\big)\phi^{1\c2}+\big(iA\c{u}+C\tau\big)\phi^{2\c1},\notag
\end{align}
for some $A,B,C\in\C$.
\end{lemma}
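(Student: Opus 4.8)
The plan is to turn the pointwise condition $*_\omega\gamma=-\gamma$ into a single linear constraint on the (constant) coefficients of $\gamma$, and then to solve that constraint explicitly in the parametrized shape \eqref{eq-gamma-1}.

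First I would fix notation. Reading the coefficients off \eqref{eq-omega} via $\omega=i\sum g_{i\c j}\phi^{i\c j}$ gives $g_{1\c1}=r^2$, $g_{2\c2}=s^2$, $g_{1\c2}=-iu$, $g_{2\c1}=i\c u$, so that $(g_{i\c j})$ is Hermitian with $\det(g_{i\c j})=r^2s^2-|u|^2>0$, which is exactly the standing hypothesis. Since $\gamma$ is left invariant, I write $\gamma=\sum_{i,j=1}^2\gamma_{i\c j}\phi^{i\c j}$ with $\gamma_{i\c j}\in\C$ \emph{constant}.

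Next I would derive the anti self dual condition. In real dimension $4$ a $(1,1)$-form is anti self dual if and only if it is primitive: within $A^{1,1}$ the self dual part is precisely $\C\omega$ (as $*_\omega\omega=\omega$), so $*_\omega\gamma=-\gamma$ is equivalent to $\gamma\wedge\omega=0$. Expanding this product in the top form $\phi^{1\c12\c2}$, using $\phi^{1\c2}\wedge\phi^{2\c1}=-\phi^{1\c1}\wedge\phi^{2\c2}$, collapses everything to the single complex equation
\begin{equation*}
is^2\gamma_{1\c1}+ir^2\gamma_{2\c2}+\c u\,\gamma_{1\c2}-u\,\gamma_{2\c1}=0.
\tag{$\star$}
\end{equation*}
(Alternatively, one obtains $(\star)$ by substituting the inverse matrix $(g^{\c i j})=(g_{i\c j})^{-1}$ and the four permutation signs $\epsilon_{A_1\c B_1}$ into the Hodge star formula \eqref{eq-local-hodge-star} with $n=2$, $p=q=1$, which is the machinery already used in Lemma \ref{lem-lie-group-almost-herm}, at the cost of more bookkeeping.) Being one nontrivial complex equation on $\C^4$, its solution space has complex dimension $3$, as it must since the pointwise space of anti self dual $(1,1)$-forms $A^-_\C$ has complex dimension $3$.

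Finally I would solve $(\star)$ in the prescribed form. Because the coefficients of $\gamma_{1\c1}$ and $\gamma_{2\c2}$ in $(\star)$ are nonzero, I treat $\gamma_{1\c1},\gamma_{1\c2},\gamma_{2\c1}$ as free data and solve for $\gamma_{2\c2}$. Setting $A=\gamma_{1\c1}/r^2$, $B=(\gamma_{1\c2}+iAu)/\sqrt{r^2s^2-|u|^2}$ and $C=(\gamma_{2\c1}-iA\c u)/\sqrt{r^2s^2-|u|^2}$ reproduces the $\phi^{1\c1}$, $\phi^{1\c2}$, $\phi^{2\c1}$ components of \eqref{eq-gamma-1}, and feeding these back into $(\star)$ yields exactly the stated $\phi^{2\c2}$-coefficient. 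Conversely, a direct substitution confirms that \eqref{eq-gamma-1} satisfies $(\star)$ for all $A,B,C\in\C$; since the map $(A,B,C)\mapsto(\gamma_{1\c1},\gamma_{1\c2},\gamma_{2\c1})$ is an invertible triangular change of variables (diagonal entries $r^2$, $\sqrt{r^2s^2-|u|^2}$, $\sqrt{r^2s^2-|u|^2}$, all nonzero), this gives a bijection between $\C^3$ and the anti self dual forms, proving both implications.

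The place requiring care is not conceptual but bookkeeping: the metric is non-diagonal, so one must track the signs in the wedge products $\phi^{i\c j}\wedge\phi^{k\c l}$ (or, on the Hodge-star route, invert $(g_{i\c j})$ and get the four signs $\epsilon_{A_1\c B_1}$ right), and then reverse-engineer the normalizing factors $1/r^2$ and $\sqrt{r^2s^2-|u|^2}$ so that the answer appears precisely as \eqref{eq-gamma-1}. The only genuine check is that the displayed $\gamma_{2\c2}$ solves $(\star)$, which rests on the cancellation of the $A$-terms, using $r^2s^2+(2|u|^2-r^2s^2)-2|u|^2=0$, together with the obvious cancellation of the terms carrying $\sqrt{r^2s^2-|u|^2}$.
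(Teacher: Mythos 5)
Your proof is correct, and it takes a different route from the paper's. The paper proceeds by a change of coframe: it sets $\psi^1=r\phi^1+i\frac{\c{u}}{r}\phi^2$, $\psi^2=\frac{\sqrt{r^2s^2-|u|^2}}{r}\phi^2$, so that $\omega=i(\psi^{1\c1}+\psi^{2\c2})$, invokes the standard description of left invariant anti self dual $(1,1)$-forms with respect to a unitary coframe as $A\psi^{1\c1}+B\psi^{1\c2}+C\psi^{2\c1}-A\psi^{2\c2}$, and substitutes back; the parametrization \eqref{eq-gamma-1} then drops out mechanically, with $A,B,C$ simply being the unitary-coframe coefficients. You instead stay in the given coframe, use that in real dimension $4$ a $(1,1)$-form is anti self dual iff it is primitive, and reduce $*_\omega\gamma=-\gamma$ to the single linear equation $is^2\gamma_{1\c1}+ir^2\gamma_{2\c2}+\c{u}\,\gamma_{1\c2}-u\,\gamma_{2\c1}=0$, which you then solve. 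I checked your computations: the wedge condition is correct (including the sign $\phi^{1\c2}\wedge\phi^{2\c1}=-\phi^{1\c1}\wedge\phi^{2\c2}$), the displayed $\gamma$ satisfies it because $r^2s^2+(2|u|^2-r^2s^2)-2|u|^2=0$ and the terms in $\sqrt{r^2s^2-|u|^2}$ cancel, and your triangular substitution for $A,B,C$ gives the required bijection since $ir^2\ne0$ pins down $\gamma_{2\c2}$ uniquely. Both arguments rest on the same underlying linear algebra (anti self dual $=$ orthogonal to $\omega$ within $\Lambda^{1,1}$); yours makes the three-dimensionality and the verification transparent at the cost of reverse-engineering the normalizations in \eqref{eq-gamma-1}, while the paper's adapted coframe produces the exact stated shape without solving anything and is reusable elsewhere.
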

\begin{proof}
If we set
\begin{equation}\label{eq-psi-1}
\psi^1=r\phi^1+i\frac{\c{u}}{r}\phi^2,\ \ \ \psi^2=\frac{\sqrt{r^2s^2-|u|^2}}{r}\phi^2,
\end{equation}
then the fundamental form $\omega$ can be rewritten as
\[
\omega=i(\psi^{1\c1}+\psi^{2\c2}).
\]
It follows that $\gamma$ is a left invariant anti self dual $(1,1)$-form iff it is written as
\begin{equation}\label{eq-gamma-unitary}
\gamma=A\psi^{1\c1}+B\psi^{1\c2}+C\psi^{2\c1}-A\psi^{2\c2},
\end{equation}
with $A,B,C\in\C$. Then \eqref{eq-gamma-1} follows from plugging \eqref{eq-psi-1} into \eqref{eq-gamma-unitary}, since
\begin{align*}
\psi^{1\c1}&=r^2\phi^{1\c1}-iu\phi^{1\c2}+i\c{u}\phi^{2\c1}+\frac{|u|^2}{r^2}\phi^{2\c2},\\
\psi^{1\c2}&=\sqrt{r^2s^2-|u|^2}\big(\phi^{1\c2}+i\frac{\c{u}}{r^2}\phi^{2\c2}\big),\\
\psi^{2\c1}&=\sqrt{r^2s^2-|u|^2}\big(\phi^{2\c1}-i\frac{{u}}{r^2}\phi^{2\c2}\big),\\
\psi^{2\c2}&=\frac{r^2s^2-|u|^2}{r^2}\phi^{2\c2}.
\end{align*}
This ends the proof.
\end{proof}

Note that, varying $u\in\C$, $r,s\in\R$, $r,s>0$ and $rs>|u|$, any given left invariant almost Hermitian metric on $G$ can be written as
\begin{equation*}
\omega=ir^2\phi^{1\c1}+is^2\phi^{2\c2}+u\phi^{1\c2}-\c{u}\phi^{2\c1},
\end{equation*}
if $\phi^1,\phi^2$ is a global coframe of left invariant $(1,0)$-forms on $G$. Combining Theorem \ref{thm-main} and Lemma \ref{lem-gamma}, we obtain the following operative corollary.

\begin{corollary}\label{cor-char}
Let $G$ be a $4$-dimensional Lie group and let $\Gamma$ be a discrete subgroup such that $M=\Gamma\backslash G$ is compact. Assume that $(J,\omega)$ is a left invariant almost Hermitian structure on $G$.
Let $\phi^1,\phi^2$ be a global coframe of left invariant $(1,0)$-forms on $(G,J)$, and write $\omega$ as in \eqref{eq-omega}, with $u\in\C$, $r,s\in\R$, $r,s>0$ and $rs>|u|$. Set $\tau=\sqrt{r^2s^2-|u|^2}\ne0$. Then $h^{1,1}_\delbar(M,J,\omega)=b^-+1$ if and only if there exist $A,B,C\in\C$ such that
\begin{align*}
&(ir^2-Ar^2)\del\phi^{1\c1}+\big(is^2-\frac1{r^2}\big(A(2|u|^2-r^2s^2)+i\tau(B\c{u}-Cu)\big)\big)\del\phi^{2\c2}+\\
&+\big(u+iAu-B\tau\big)\del\phi^{1\c2}+\big(-\c{u}-iA\c{u}-C\tau\big)\del\phi^{2\c1}=0,\notag
\end{align*}
and
\begin{align*}
&(ir^2+Ar^2)\delbar\phi^{1\c1}+\big(is^2+\frac1{r^2}\big(A(2|u|^2-r^2s^2)+i\tau(B\c{u}-Cu)\big)\big)\delbar\phi^{2\c2}+\\
&+\big(u-iAu+B\tau\big)\delbar\phi^{1\c2}+\big(-\c{u}+iA\c{u}+C\tau\big)\delbar\phi^{2\c1}=0.\notag
\end{align*}
Otherwise, $h^{1,1}_\delbar(M,J,\omega)=b^-$.
\end{corollary}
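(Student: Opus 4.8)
The plan is to combine Theorem \ref{thm-main} with the parametrization of left invariant anti self dual $(1,1)$-forms provided by Lemma \ref{lem-gamma}, and then to make the condition $id^c\gamma=d\omega$ fully explicit. By Theorem \ref{thm-main}, the equality $h^{1,1}_\delbar(M,J,\omega)=b^-+1$ holds precisely when there exists a left invariant anti self dual $(1,1)$-form $\gamma$ solving $id^c\gamma=d\omega$; and by Lemma \ref{lem-gamma} every such $\gamma$ is of the form \eqref{eq-gamma-1} for some $A,B,C\in\C$. Thus the task reduces to rewriting the single equation $id^c\gamma=d\omega$ as a system of scalar equations in the unknowns $A,B,C$, after which the final dichotomy is immediate from Theorem \ref{thm-main}.

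The key structural simplification is that we work in real dimension $4$, so $n=2$ and there are no nonzero $(3,0)$- or $(0,3)$-forms. Consequently, for any $(1,1)$-form $\eta$ one has $\mu\eta=0$ and $\c\mu\eta=0$, so $d\eta=\del\eta+\delbar\eta$. Applying this to $\omega$ gives $d\omega=\del\omega+\delbar\omega$, while applying the formula $d^c=i(\c\mu+\delbar-\del-\mu)$ to $\gamma$ yields $d^c\gamma=i(\delbar\gamma-\del\gamma)$, whence $id^c\gamma=\del\gamma-\delbar\gamma$. Since $\del\gamma,\del\omega\in A^{2,1}$ and $\delbar\gamma,\delbar\omega\in A^{1,2}$, and since $3$-forms in dimension $4$ decompose as $A^{2,1}\oplus A^{1,2}$, comparing bidegrees splits the equation $id^c\gamma=d\omega$ into the pair
\[
\del\gamma=\del\omega,\qquad \delbar\gamma=-\delbar\omega.
\]

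It then remains to substitute the explicit coefficients. I would read off from \eqref{eq-gamma-1} the coefficients of $\gamma$ against $\phi^{1\c1},\phi^{2\c2},\phi^{1\c2},\phi^{2\c1}$ and from \eqref{eq-omega} those of $\omega$, expand $\del$ and $\delbar$ by linearity, and collect the independent terms $\del\phi^{i\c j}$, respectively $\delbar\phi^{i\c j}$. Moving everything to one side, the relation $\del\gamma=\del\omega$ becomes the first displayed equation of the statement (it is literally $-(\del\gamma-\del\omega)=0$, which explains the overall sign carried by each coefficient), and $\delbar\gamma=-\delbar\omega$ becomes the second. The alternative value $h^{1,1}_\delbar=b^-$ then follows from the last sentence of Theorem \ref{thm-main}.

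The only genuinely delicate point is the sign and factor-of-$i$ bookkeeping in passing from $id^c\gamma=d\omega$ to the split form, namely verifying that $d^c\gamma=i(\delbar\gamma-\del\gamma)$ on $(1,1)$-forms in dimension $4$ and hence $id^c\gamma=\del\gamma-\delbar\gamma$. Everything after this reduction is a direct, if somewhat lengthy, linear substitution of the coefficients appearing in \eqref{eq-gamma-1} and \eqref{eq-omega}, which I would not expand in full.
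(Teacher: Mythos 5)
Your argument is correct and follows the paper's proof essentially verbatim: invoke Theorem \ref{thm-main}, parametrize $\gamma$ via Lemma \ref{lem-gamma}, use that $\mu$ and $\c\mu$ annihilate $(1,1)$-forms in real dimension $4$ to split $id^c\gamma=d\omega$ into $\del(\omega-\gamma)=0$ and $\delbar(\omega+\gamma)=0$, and substitute the coefficients. Your sign bookkeeping ($d^c=i(\delbar-\del)$ on $(1,1)$-forms, hence $id^c\gamma=\del\gamma-\delbar\gamma$) is the correct one and matches the displayed equations of the statement.
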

\begin{proof}
By Theorem \ref{thm-main}, $h^{1,1}_\delbar(M,J,\omega)=b^-+1$ if and only if there exists a left invariant anti self dual $(1,1)$-form $\gamma$ satisfying $id^c\gamma=d\omega$. In general, the left invariant almost Hermitian metric $\omega$ is expressed by \eqref{eq-omega}, and a left invariant anti self dual $(1,1)$-form $\gamma$ on $G$ is written as in \eqref{eq-gamma-1} by Lemma \ref{lem-gamma}. Since $d^c=i(\delbar-\del)$ and $d=\del+\delbar$ on $(1,1)$-forms in real dimension $4$, then
\[
id^c\gamma=d\omega\quad \iff\quad \del(\omega-\gamma)=0,\quad \delbar(\omega+\gamma)=0.
\]
Combining this together with \eqref{eq-omega} and \eqref{eq-gamma-1}, we derive the thesis.
\end{proof}
We remark that $h^{1,1}_\delbar(M,J,\omega)=b^-+1$ in Corollary \ref{cor-char} does not depend on the choice of the global coframe $\phi^1,\phi^2$ of left invariant $(1,0)$-forms on $G$.

With a similar proof to Theorem \ref{thm-main}, using Theorem \ref{thm-belgun}, we can also prove that there exists an almost K\"ahler metric on $M$ if and only if there exists a left invariant almost K\"ahler metric on $G$.

\begin{proposition}\label{prop-alm-kahl}
Let $G$ be a $2n$-dimensional Lie group and let $\Gamma$ be a discrete subgroup such that $M=\Gamma\backslash G$ is compact. Assume that $J$ is a left invariant almost complex structure on $G$. Then, there exists an almost K\"ahler metric on $(M,J)$ if and only if there exists a left invariant almost K\"ahler metric on $(G,J)$.
\end{proposition}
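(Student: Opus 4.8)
The plan is to prove the two implications separately. The forward direction is immediate, and the substantial content lies in the reverse direction, which I would handle by the same symmetrization strategy used in Theorem \ref{thm-main}.

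For the forward direction, suppose $(G,J)$ carries a left invariant almost K\"ahler metric with fundamental form $\omega_0$. Being left invariant, $\omega_0$ is in particular $\Gamma$-invariant, hence descends to a well-defined almost Hermitian metric on $M=\Gamma\backslash G$; since $d\omega_0=0$ on $G$ and descent commutes with $d$, the descended fundamental form is still closed, so $(M,J)$ admits an almost K\"ahler metric. For the reverse direction, I would start from an almost K\"ahler metric on $(M,J)$ with fundamental form $\omega$, so that $\omega$ is a real $J$-invariant $(1,1)$-form which is pointwise positive (that is, $\omega(X,JX)>0$ for every nonzero tangent vector $X$) and satisfies $d\omega=0$. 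Fixing a volume form $\nu$ on $M$ induced by a bi-invariant volume form on $G$ (which exists by Milnor's lemma), I would pass to the symmetrization $\omega_\nu$.

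Since symmetrization respects the bidegree decomposition, $\omega_\nu\in A^{1,1}(G,J)$ and it is left invariant; and since $d$ commutes with symmetrization by Theorem \ref{thm-belgun}, we get $d\omega_\nu=(d\omega)_\nu=0$, so $\omega_\nu$ is closed. It then remains to verify that $\omega_\nu$ is the fundamental form of an almost Hermitian metric, namely that it is real, $J$-invariant, and positive. Reality is clear, as $\omega_\nu$ is obtained by integrating the real-valued functions $\omega(X,Y)$ against $\nu$. For $J$-invariance, evaluating on left invariant vector fields and using that $J$ is left invariant and $\omega$ is $J$-invariant gives $\omega_\nu(JX,JY)=\frac1{\nu(M)}\int_M\omega(JX,JY)\,\nu=\frac1{\nu(M)}\int_M\omega(X,Y)\,\nu=\omega_\nu(X,Y)$.

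The only genuine point, and the step I expect to be the main obstacle, is positivity, since closedness and $J$-invariance pass through the averaging essentially for free. Here the key observation is that positivity is preserved under averaging against a positive measure: for a nonzero $X\in\mathfrak{g}$, regarded as a nowhere-vanishing left invariant vector field on $M$, the integrand $\omega(X,JX)$ is strictly positive at every point of $M$, whence $\omega_\nu(X,JX)=\frac1{\nu(M)}\int_M\omega(X,JX)\,\nu>0$. Equivalently, the $J$-compatible positive symmetric forms on $\mathfrak{g}$ form an open convex cone, and $\omega_\nu$ is a convex average of the pointwise forms $\omega_p$, each lying in that cone, so $\omega_\nu$ lies there too. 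Thus $\omega_\nu$ is a left invariant, closed, positive real $(1,1)$-form, i.e. the fundamental form of a left invariant almost K\"ahler metric on $(G,J)$, which completes the argument.
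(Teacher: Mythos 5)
Your proposal is correct and follows the same route as the paper: the forward direction by descent, and the reverse direction by symmetrizing $\omega$ against the bi-invariant volume form and invoking Theorem \ref{thm-belgun} to get $d\omega_\nu=(d\omega)_\nu=0$. The paper's proof is terser and leaves implicit the verification that $\omega_\nu$ is still the fundamental form of an almost Hermitian metric (reality, $J$-invariance, positivity), which you correctly supply via the convexity of the cone of $J$-compatible positive forms on $\mathfrak{g}$.
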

\begin{proof}
The only thing to prove is that if there exists an almost K\"ahler metric $\omega$ on $M$ then there exists a left invariant almost K\"ahler metric $\tilde\omega$ on $G$. Let $\nu$ be a volume form on $M$ induced by a bi-invariant volume form on $G$. Choose $\tilde\omega=\omega_\nu$. Note that $\omega_\nu$ is a positive real $(1,1)$-form and, since $d\omega=0$, then $d\omega_\nu=(d\omega)_\nu=0$ by Theorem \ref{thm-belgun}.
\end{proof}

\section{Applications}\label{sec-applications}
This section is devoted to the study of the number $h^{1,1}_\delbar$ on explicit examples of almost Hermitian $4$-manifolds which are obtained as the compact quotient of a Lie group by a discrete subgroup, through the systematic use of Corollary \ref{cor-char}. Motivated by Question \ref{question-zhang}, we also check if there exist almost K\"ahler metrics on such manifolds through the use of Proposition \ref{prop-alm-kahl}.

\subsection{Secondary Kodaira surface}
Let $M=\Gamma\backslash G$ be a secondary Kodaira surface. Here $G$ is a solvable Lie group and $\Gamma$ is a cocompact lattice. We refer to \cite[pp. 756, 760]{Ha} for the construction of $M$ and for the structure equations of the global coframe $\{e^1,e^2,e^3,e^4\}$ of left invariant $1$-forms on $G$,
\[
de^1=e^{24},\ \ \ de^2=-e^{14},\ \ \ de^3=e^{12},\ \ \ de^4=0.
\]
Recall that $b^2=0$, therefore in particular $b^-=0$.

We endow $G$ and $M$ with the left invariant almost complex structure $J$ given by
\[
\phi^1=e^1+ie^3,\ \ \ \phi^2=e^2+ie^4
\]
being a global coframe of the vector bundle of $(1,0)$ forms $T^{1,0}G$. The associated structure equations are
\begin{align*}
&d\phi^1=\frac{i}4\big(\phi^{12}+\phi^{1\c2}-\phi^{2\c1}+2\phi^{2\c2}+\phi^{\c1\c2}\big),\\
&d\phi^2=\frac{i}4\big(\phi^{12}-\phi^{1\c2}-\phi^{2\c1}-\phi^{\c1\c2}\big),
\end{align*}
therefore the almost complex structure $J$ is non integrable. This is the same almost complex structure considered in \cite[Section 6]{piovani-parma}.
From the structure equations we derive
\begin{align*}
4i\del\phi^{1\c1}&=2\phi^{12\c2},&
4i\delbar\phi^{1\c1}&=2\phi^{2\c1\c2},\\
4i\del\phi^{1\c2}&=-\phi^{12\c1}-\phi^{12\c2},&
4i\delbar\phi^{1\c2}&=-\phi^{1\c1\c2}+\phi^{2\c1\c2},\\
4i\del\phi^{2\c1}&=-\phi^{12\c1}+\phi^{12\c2},&
4i\delbar\phi^{2\c1}&=-\phi^{1\c1\c2}-\phi^{2\c1\c2},\\
4i\del\phi^{2\c2}&=0,&
4i\delbar\phi^{2\c2}&=0.
\end{align*}

Endow $(G,J)$ with a left invariant almost Hermitian metric
\begin{equation*}
\omega=ir^2\phi^{1\c1}+is^2\phi^{2\c2}+u\phi^{1\c2}-\c{u}\phi^{2\c1},
\end{equation*}
with $u\in\C$, $r,s\in\R$, $r,s>0$ and $rs>|u|$. Set $\tau=\sqrt{r^2s^2-|u|^2}\ne0$.

By Corollary \ref{cor-char}, we have that $h^{1,1}_\delbar(M,J,\omega)=b^-+1$ if and only if there exist $A,B,C\in\C$ such that
\begin{align*}
(ir^2-Ar^2)2\phi^{12\c2}+&(u+iAu-B\tau)\big(-\phi^{12\c1}-\phi^{12\c2}\big)+\\
+&(-\c{u}-iA\c{u}-C\tau)\big(-\phi^{12\c1}+\phi^{12\c2}\big)=0,
\end{align*}
and
\begin{align*}
(ir^2+Ar^2)2\phi^{2\c1\c2}+&(u-iAu+B\tau)\big(-\phi^{1\c1\c2}+\phi^{2\c1\c2}\big)+\\
+&(-\c{u}+iA\c{u}+C\tau)\big(-\phi^{1\c1\c2}-\phi^{2\c1\c2}\big)=0,
\end{align*}
if and only if there exist $A,B,C\in\C$ such that
\begin{numcases}{}
-u-iAu+B\tau+\c{u}+iA\c{u}+C\tau=0,\label{eq-sec-kod-1}\\
-u+iAu-B\tau+\c{u}-iA\c{u}-C\tau=0,\label{eq-sec-kod-2}\\
2ir^2-2Ar^2-u-iAu+B\tau-\c{u}-iA\c{u}-C\tau=0,\label{eq-sec-kod-3}\\
2ir^2+2Ar^2+u-iAu+B\tau+\c{u}-iA\c{u}-C\tau=0.\label{eq-sec-kod-4}
\end{numcases}
Summing \eqref{eq-sec-kod-1} and \eqref{eq-sec-kod-2}, subtracting \eqref{eq-sec-kod-1} from \eqref{eq-sec-kod-2}, summing \eqref{eq-sec-kod-3} and \eqref{eq-sec-kod-4}, subtracting \eqref{eq-sec-kod-3} from \eqref{eq-sec-kod-4}, we obtain
\begin{equation*}
\begin{cases}
u=\c{u},\\
B=-C,\\
C=\frac{i(r^4+u^2)}{r^2\sqrt{r^2s^2-u^2}},\\
A=-\frac{u}{r^2}.
\end{cases}
\end{equation*}
Therefore, $\img(u)=0$ if and only if $h^{1,1}_\delbar(M,J,\omega)=b^-+1=1$, and $\img(u)\ne0$ if and only if $h^{1,1}_\delbar(M,J,\omega)=b^-=0$.

Note that, since $b^2=0$, then there exist no symplectic forms on $M$,  and thus no almost K\"ahler metrics on $(M,J)$. This can be reproved with the help of Proposition \ref{prop-alm-kahl}. Indeed, there exists an almost K\"ahler metric on $(M,J)$ if and only if there exists a left invariant almost K\"ahler metric on $(G,J)$. Since $d\omega=\del\omega+\delbar\omega$ and $\c\omega=\omega$, then $d\omega=0$ if and only if $\delbar\omega=0$. We have $\delbar\omega=0$ if and only if
\begin{equation*}
\begin{cases}
u=\c{u},\\
u=-ir^2,
\end{cases}
\end{equation*}
which implies $u=r=0$, but this cannot happen since $r>0$.

Summing up, we have just proved the following
\begin{proposition}\label{prop-sec-kod}
Let $M=\Gamma\backslash G$, $J$, $\omega$ be as above. We have $h^{1,1}_\delbar(M,J,\omega)=b^-+1=1$ iff $\img(u)=0$, otherwise $h^{1,1}_\delbar(M,J,\omega)=b^-=0$. Moreover, there exist no almost K\"ahler metrics on $(M,J)$.
\end{proposition}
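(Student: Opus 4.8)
The plan is to read off both conclusions from the machinery already assembled: Corollary~\ref{cor-char} for the Dolbeault count, and Proposition~\ref{prop-alm-kahl} (or Proposition~\ref{prop-sympl}) for the almost K\"ahler assertion. Since $b^2=0$ forces $b^-=0$, the value $h^{1,1}_\delbar(M,J,\omega)$ can only be $0$ or $1$, and everything reduces to deciding, for each admissible triple $(r,s,u)$, whether the two systems of Corollary~\ref{cor-char} admit a solution $A,B,C\in\C$.

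First I would record the action of $\del$ and $\delbar$ on the four basis $(1,1)$-forms $\phi^{1\c1},\phi^{1\c2},\phi^{2\c1},\phi^{2\c2}$, obtained by differentiating the structure equations for $\phi^1,\phi^2$; the outputs are supported on $\phi^{12\c1},\phi^{12\c2}$ (for $\del$) and on $\phi^{1\c1\c2},\phi^{2\c1\c2}$ (for $\delbar$), with $\phi^{2\c2}$ being both $\del$- and $\delbar$-closed. Substituting these computed derivatives into the two displayed identities of Corollary~\ref{cor-char}, and collecting the coefficients of the four independent $(2,1)$- and $(1,2)$-forms, produces a linear system in $A,B,C$; writing $\tau=\sqrt{r^2s^2-|u|^2}$, this is exactly the four scalar equations \eqref{eq-sec-kod-1}--\eqref{eq-sec-kod-4}.

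The decisive step is the solvability analysis of this overdetermined system (four equations in three unknowns). Taking the sum and difference of \eqref{eq-sec-kod-1} and \eqref{eq-sec-kod-2}, and likewise of \eqref{eq-sec-kod-3} and \eqref{eq-sec-kod-4}, decouples the reality constraint $u=\c{u}$ from the formulas that determine the unknowns: one obtains $A=-u/r^2$, $B=-C$, and $C=i(r^4+u^2)/(r^2\tau)$, all mutually consistent precisely when $\img(u)=0$. Hence the system has a solution iff $u$ is real, which by Corollary~\ref{cor-char} yields $h^{1,1}_\delbar=b^-+1=1$ when $\img(u)=0$ and $h^{1,1}_\delbar=b^-=0$ otherwise. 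The one genuine obstacle is the consistency check: one must confirm that imposing $u=\c{u}$ makes all four equations simultaneously satisfiable rather than merely necessary, and the sum/difference manipulation (which isolates the reality condition from the equations solving for $A,B,C$) is exactly what turns this necessary condition into a sufficient one.

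For the final assertion I would argue in two independent ways. Conceptually, $b^2=0$ rules out any symplectic form, so by Proposition~\ref{prop-sympl} there is no left invariant symplectic form on $G$ and a fortiori no left invariant almost K\"ahler metric; combined with Proposition~\ref{prop-alm-kahl} this excludes almost K\"ahler metrics on $(M,J)$. More explicitly, by Proposition~\ref{prop-alm-kahl} it suffices to rule out left invariant almost K\"ahler metrics on $(G,J)$: since $\c{\omega}=\omega$ one has $d\omega=0\iff\delbar\omega=0$, and computing $\delbar\omega$ from the structure equations shows that $\delbar\omega=0$ forces simultaneously $u=\c{u}$ and $u=-ir^2$, hence $u=r=0$, contradicting $r>0$. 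Either route settles the claim.
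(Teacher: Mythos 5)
Your proposal is correct and follows essentially the same route as the paper: it derives the four scalar equations \eqref{eq-sec-kod-1}--\eqref{eq-sec-kod-4} from Corollary \ref{cor-char}, performs the same sum/difference manipulation to isolate the reality condition $u=\c{u}$ alongside the explicit formulas for $A,B,C$, and handles the almost K\"ahler claim by the same two arguments (the topological $b^2=0$ obstruction and the explicit $\delbar\omega=0$ computation via Proposition \ref{prop-alm-kahl}). No substantive differences.
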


This gives a negative answer to Question \ref{question-zhang}.

\subsection{Inoue surface $S_M$}

Let $M=\Gamma\backslash G$ be a Inoue surface of type $S_M$. Here $G$ is a solvable Lie group and $\Gamma$ is a cocompact lattice. We refer to \cite[pp. 755, 760]{Ha} for its construction and for the structure equations of the global coframe $\{e^1,e^2,e^3,e^4\}$ of left invariant $1$-forms on $G$. For any $\alpha,\beta\in\R$, $\alpha\ne0$,
\[
de^1=\alpha e^{14}+\beta e^{24},\ \ \ de^2=-\beta e^{14}+\alpha e^{24},\ \ \ de^3=-2\alpha e^{34},\ \ \ de^4=0.
\]
Recall that $b^2=0$, therefore in particular $b^-=0$.

We endow $G$ and $M$ with the left invariant almost complex structure $J$ given by
\[
\phi^1=e^1+ie^3,\ \ \ \phi^2=e^2+ie^4
\]
being a global coframe of the vector bundle of $(1,0)$ forms $T^{1,0}G$. The associated structure equations are
\begin{align*}
&d\phi^1=\alpha\frac{i}4\big(\phi^{12}-\phi^{1\c2}+3\phi^{2\c1}+3\phi^{\c1\c2}\big)+\beta\frac{i}2\phi^{2\c2},\\
&d\phi^2=\beta\frac{i}4\big(\phi^{12}-\phi^{1\c2}-\phi^{2\c1}-\phi^{\c1\c2}\big)+\alpha\frac{i}2\phi^{2\c2},
\end{align*}
therefore the almost complex structure $J$ is non integrable.
This is the same almost complex structure considered in \cite[Section 6]{piovani-parma}.
From the structure equations we derive
\begin{align*}
4i\del\phi^{1\c1}&=-2\alpha\phi^{12\c1}+2\beta\phi^{12\c2},&
4i\delbar\phi^{1\c1}&=-2\alpha\phi^{1\c1\c2}+2\beta\phi^{2\c1\c2},\\
4i\del\phi^{1\c2}&=-\beta\phi^{12\c1}+\alpha\phi^{12\c2},&
4i\delbar\phi^{1\c2}&=-\beta\phi^{1\c1\c2}-3\alpha\phi^{2\c1\c2},\\
4i\del\phi^{2\c1}&=-\beta\phi^{12\c1}-3\alpha\phi^{12\c2},&
4i\delbar\phi^{2\c1}&=-\beta\phi^{1\c1\c2}+\alpha\phi^{2\c1\c2},\\
4i\del\phi^{2\c2}&=0,&
4i\delbar\phi^{2\c2}&=0.
\end{align*}

Endow $(G,J)$ with a left invariant almost Hermitian metric
\begin{equation*}
\omega=ir^2\phi^{1\c1}+is^2\phi^{2\c2}+u\phi^{1\c2}-\c{u}\phi^{2\c1},
\end{equation*}
with $u\in\C$, $r,s\in\R$, $r,s>0$ and $rs>|u|$. Here $r,s,u$ may depend on $\alpha,\beta$. Set $\tau=\sqrt{r^2s^2-|u|^2}\ne0$.

By Corollary \ref{cor-char}, we have that $h^{1,1}_\delbar(M,J,\omega)=b^-+1$ if and only if there exist $A,B,C\in\C$ such that
\begin{align*}
(ir^2-Ar^2)\big(-2\alpha\phi^{12\c1}+2\beta\phi^{12\c2}\big)+&(u+iAu-B\tau)\big(-\beta\phi^{12\c1}+\alpha\phi^{12\c2}\big)+\\
+&(-\c{u}-iA\c{u}-C\tau)\big(-\beta\phi^{12\c1}-3\alpha\phi^{12\c2}\big)=0,
\end{align*}
and
\begin{align*}
(ir^2+Ar^2)\big(-2\alpha\phi^{1\c1\c2}+2\beta\phi^{2\c1\c2}\big)+&(u-iAu+B\tau)\big(-\beta\phi^{1\c1\c2}-3\alpha\phi^{2\c1\c2}\big)+\\
+&(-\c{u}+iA\c{u}+C\tau)\big(-\beta\phi^{1\c1\c2}+\alpha\phi^{2\c1\c2}\big)=0,
\end{align*}
if and only if there exist $A,B,C\in\C$ such that
\begin{numcases}{}
-2\alpha(ir^2-Ar^2)-\beta(u+iAu-B\tau)+\beta(\c{u}+iA\c{u}+C\tau)=0,\label{eq-ino-sm-1}\\
-2\alpha(ir^2+Ar^2)-\beta(u-iAu+B\tau)+\beta(\c{u}-iA\c{u}-C\tau)=0,\label{eq-ino-sm-2}\\
2\beta(ir^2-Ar^2)+\alpha(u+iAu-B\tau)+3\alpha(\c{u}+iA\c{u}+C\tau)=0,\label{eq-ino-sm-3}\\
2\beta(ir^2+Ar^2)-3\alpha(u-iAu+B\tau)+\alpha(-\c{u}+iA\c{u}+C\tau)=0.\label{eq-ino-sm-4}
\end{numcases}
Summing \eqref{eq-ino-sm-1} and \eqref{eq-ino-sm-2}, subtracting \eqref{eq-ino-sm-1} from \eqref{eq-ino-sm-2}, summing \eqref{eq-ino-sm-3} and \eqref{eq-ino-sm-4}, subtracting \eqref{eq-ino-sm-3} from \eqref{eq-ino-sm-4}, we obtain
\begin{equation*}
\begin{cases}
\beta(u-\c{u})=-2i\alpha r^2,\\
B=-C,\\
C=\frac{3(3u+\c{u})\alpha^3-6ir^2\alpha^2\beta+(3u+\c{u})\alpha\beta^2-2ir^2\beta^3}{4\alpha(\alpha^2+\beta^2)\sqrt{r^2s^2-|u|^2}},\\
A=-\frac{2i\alpha^2}{\alpha^2+\beta^2}.
\end{cases}
\end{equation*}
Therefore, $\beta\img(u)=-\alpha r^2$ if and only if $h^{1,1}_\delbar(M,J,\omega)=b^-+1=1$, and $\beta\img(u)\ne-\alpha r^2$ if and only if $h^{1,1}_\delbar(M,J,\omega)=b^-=0$.

Note that, since $b^2=0$, then there exist no symplectic forms on $M$,  and thus no almost K\"ahler metrics on $(M,J)$.
Let us also check it explicitly. 
We have $\delbar\omega=0$ if and only if
\begin{equation*}
\begin{cases}
\beta(u-\c{u})=-2i\alpha r^2,\\
u(3\alpha^2+\beta^2)=\c{u}(\beta^2-\alpha^2),
\end{cases}
\end{equation*}
which gives a contradiction. Therefore, by Proposition \ref{prop-alm-kahl}, there exist no almost K\"ahler metrics on $(M,J)$.

Summing up, we have just proved the following
\begin{proposition}\label{prop-inoue}
Let $M=\Gamma\backslash G$, $J$, $\omega$ be as above. We have $h^{1,1}_\delbar(M,J,\omega)=b^-+1=1$ iff $\beta\img(u)=-\alpha r^2$, otherwise $h^{1,1}_\delbar(M,J,\omega)=b^-=0$.
In particular, if $\beta=0$, then $h^{1,1}_\delbar(M,J,\omega)=b^-=0$ for any left invariant almost Hermitian metric $\omega$ on $G$.
Moreover, there exist no almost K\"ahler metrics on $M$.
\end{proposition}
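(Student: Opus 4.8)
The plan is to assemble the claim from Corollary \ref{cor-char} and Proposition \ref{prop-alm-kahl}, using the structure equations of the Inoue surface $S_M$ recorded above. By Corollary \ref{cor-char}, the equality $h^{1,1}_\delbar(M,J,\omega)=b^-+1$ holds exactly when the displayed $\del$- and $\delbar$-equations admit a common solution $A,B,C\in\C$, and otherwise $h^{1,1}_\delbar(M,J,\omega)=b^-$; since $b^2=0$ gives $b^-=0$, the whole first assertion reduces to deciding the solvability of that linear system and naming the condition on $\alpha,\beta,r,s,u$ under which a solution exists.

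First I would insert the formulas for $\del\phi^{i\c j}$ and $\delbar\phi^{i\c j}$ into the two equations of Corollary \ref{cor-char}. Expanding in the bases $\{\phi^{12\c1},\phi^{12\c2}\}$ and $\{\phi^{1\c1\c2},\phi^{2\c1\c2}\}$ and equating each coefficient to zero yields the four scalar relations \eqref{eq-ino-sm-1}--\eqref{eq-ino-sm-4}. The key step is to decouple this overdetermined system by the four combinations indicated in the text: the sum of \eqref{eq-ino-sm-1} and \eqref{eq-ino-sm-2} makes the $A$, $B$, $C$ contributions cancel and leaves the pure compatibility relation $\beta(u-\c u)=-2i\alpha r^2$, while the difference of \eqref{eq-ino-sm-1}, \eqref{eq-ino-sm-2} together with the sum and difference of \eqref{eq-ino-sm-3}, \eqref{eq-ino-sm-4} determine the constants, giving $B=-C$, the explicit value of $C$, and $A=-\tfrac{2i\alpha^2}{\alpha^2+\beta^2}$. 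Because these last three combinations are always solvable for $A,B,C$ once the compatibility relation holds, solvability of the full system is equivalent to $\beta(u-\c u)=-2i\alpha r^2$.

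It then remains to restate this relation. As $u-\c u=2i\,\img(u)$, the condition $\beta(u-\c u)=-2i\alpha r^2$ is precisely $\beta\,\img(u)=-\alpha r^2$, which yields the stated dichotomy $h^{1,1}_\delbar(M,J,\omega)=b^-+1=1$ versus $h^{1,1}_\delbar(M,J,\omega)=b^-=0$. The case $\beta=0$ then follows at once, since the relation degenerates to $0=-\alpha r^2$, impossible as $\alpha\ne0$ and $r>0$; hence $h^{1,1}_\delbar(M,J,\omega)=0$ for every left invariant almost Hermitian metric.

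For the last assertion I would invoke Proposition \ref{prop-alm-kahl} to replace the existence of an almost K\"ahler metric on $M$ by that of a left invariant one on $G$. Using $d\omega=\del\omega+\delbar\omega$ on $(1,1)$-forms together with $\c\omega=\omega$, closedness $d\omega=0$ is equivalent to $\delbar\omega=0$; expanding $\delbar\omega$ and setting coefficients to zero gives the pair $\beta(u-\c u)=-2i\alpha r^2$ and $u(3\alpha^2+\beta^2)=\c u(\beta^2-\alpha^2)$, and a short check (the second relation forces $\img(u)=0$, contradicting the first, since $\alpha\ne0$) shows they cannot hold simultaneously. Alternatively, $b^2=0$ rules out symplectic forms outright, hence almost K\"ahler metrics. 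I expect the only genuine obstacle to be the linear-algebra bookkeeping in solving \eqref{eq-ino-sm-1}--\eqref{eq-ino-sm-4}, in particular checking that $A$, $B$, $C$ are simultaneously consistent; the rest is direct substitution.
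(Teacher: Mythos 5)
Your proposal is correct and follows essentially the same route as the paper: apply Corollary \ref{cor-char}, reduce the four scalar equations \eqref{eq-ino-sm-1}--\eqref{eq-ino-sm-4} by the indicated sums and differences to the single compatibility relation $\beta(u-\c u)=-2i\alpha r^2$ (equivalently $\beta\img(u)=-\alpha r^2$) plus explicit values of $A,B,C$, and handle the almost K\"ahler claim via $b^2=0$ together with the explicit check through Proposition \ref{prop-alm-kahl}. The only difference is that you spell out slightly more of the bookkeeping (which combination isolates the compatibility relation, and why the $\delbar\omega=0$ system is contradictory), which matches what the paper leaves implicit.
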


This gives a negative answer to Question \ref{question-zhang}.

\subsection{$4$-nilmanifold without complex structures, I}

Let $M=\Gamma\backslash G$ be the $4$-dimensional nilmanifold which does not admit integrable almost complex structures. Here $G$ is a nilpotent Lie group and $\Gamma$ is a cocompact lattice. We refer to \cite[p. 758]{Ha} for its construction and to \cite[p. 7]{cattaneo-nannicini-tomassini} for the structure equations of the global coframe $\{e^1,e^2,e^3,e^4\}$ of left invariant $1$-forms on $G$,
\[
de^1=0,\ \ \ de^2=0,\ \ \ de^3=- e^{12},\ \ \ de^4=-e^{13}.
\]
Recall that $b^2=2$ and $b^-=1$.

We endow $G$ and $M$ with the left invariant almost complex structure $J$ given by
\[
\phi^1=e^3+ie^4,\ \ \ \phi^2=e^1+ie^2
\]
being a global coframe of the vector bundle of $(1,0)$ forms $T^{1,0}G$. If we switch $\phi^1$ and $\phi^2$, then this is the same almost complex structure considered in \cite[Section 3]{PT5} and  in \cite[p. 20]{cattaneo-nannicini-tomassini}. The associated structure equations are
\begin{align*}
&d\phi^1=\frac{i}4\big(\phi^{12}+\phi^{1\c2}-\phi^{2\c1}-2\phi^{2\c2}+\phi^{\c1\c2}\big),\\
&d\phi^2=0.
\end{align*}
Note that the almost complex structure $J$ is non integrable.
From the structure equations we derive
\begin{align*}
4i\del\phi^{1\c1}&=-2\phi^{12\c2},&
4i\delbar\phi^{1\c1}&=-2\phi^{2\c1\c2},\\
4i\del\phi^{1\c2}&=-\phi^{12\c2},&
4i\delbar\phi^{1\c2}&=\phi^{2\c1\c2},\\
4i\del\phi^{2\c1}&=\phi^{12\c2},&
4i\delbar\phi^{2\c1}&=-\phi^{2\c1\c2},\\
4i\del\phi^{2\c2}&=0,&
4i\delbar\phi^{2\c2}&=0.
\end{align*}

Endow $(G,J)$ with a left invariant almost Hermitian metric
\begin{equation*}
\omega=ir^2\phi^{1\c1}+is^2\phi^{2\c2}+u\phi^{1\c2}-\c{u}\phi^{2\c1},
\end{equation*}
with $u\in\C$, $r,s\in\R$, $r,s>0$ and $rs>|u|$. Set $\tau=\sqrt{r^2s^2-|u|^2}\ne0$.

By Corollary \ref{cor-char}, we have that $h^{1,1}_\delbar(M,J,\omega)=b^-+1$ if and only if there exist $A,B,C\in\C$ such that
\begin{numcases}{}
-2ir^2+2Ar^2-u-iAu+B\tau-\c{u}-iA\c{u}-C\tau=0,\label{eq-nil1-1}\\
-2ir^2-2Ar^2+u-iAu+B\tau+\c{u}-iA\c{u}-C\tau=0.\label{eq-nil1-2}
\end{numcases}
Summing \eqref{eq-nil1-1} and \eqref{eq-nil1-2}, subtracting \eqref{eq-nil1-1} from \eqref{eq-nil1-2}, we obtain
\begin{equation*}
\begin{cases}
A=r^2\real(u),\\
B-C=\frac{2ir^2(1+\real(u)^2r^4)}{\sqrt{r^2s^2-|u|^2}}.
\end{cases}
\end{equation*}
Therefore it always holds that $h^{1,1}_\delbar(M,J,\omega)=b^-+1=2$.

Note that $\omega$ is almost K\"ahler, i.e., $\delbar\omega=0$, if and only if
\begin{equation*}
\real(u)=ir^2.
\end{equation*}

Summing up, we have just proved the following
\begin{proposition}
Let $M=\Gamma\backslash G$, $J$, $\omega$ be as above. We have $h^{1,1}_\delbar(M,J,\omega)=b^-+1=2$ for every left invariant almost Hermitian metric $\omega$ on $(G,J)$.
Moreover, $\omega$ is almost K\"ahler iff $\real(u)=ir^2$.
\end{proposition}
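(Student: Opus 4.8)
The plan is to apply Corollary \ref{cor-char} directly and to exploit the very degenerate shape of the structure equations of this $J$. First I would record the crucial structural observation: among the derivatives listed above, every $\del\phi^{i\c j}$ is a scalar multiple of the single $(2,1)$-form $\phi^{12\c2}$, and every $\delbar\phi^{i\c j}$ is a scalar multiple of the single $(1,2)$-form $\phi^{2\c1\c2}$, with $\del\phi^{2\c2}=\delbar\phi^{2\c2}=0$. Hence each of the two vector identities in Corollary \ref{cor-char} collapses to a \emph{single} scalar equation, obtained by reading off the coefficient of $\phi^{12\c2}$ in the first and of $\phi^{2\c1\c2}$ in the second. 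Plugging in $4i\del\phi^{1\c1}=-2\phi^{12\c2}$, $4i\del\phi^{1\c2}=-\phi^{12\c2}$, $4i\del\phi^{2\c1}=\phi^{12\c2}$ and the corresponding $\delbar$-formulas produces exactly the two relations \eqref{eq-nil1-1} and \eqref{eq-nil1-2} in the unknowns $A,B,C\in\C$.

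The key point is then a dimension count rather than any real computation: I would have a homogeneous linear system of only two equations in the three complex unknowns $A,B,C$. Forming the difference of \eqref{eq-nil1-1} and \eqref{eq-nil1-2} determines $A$ in terms of $r$ and $\real(u)$, and their sum then fixes the combination $B-C$; since $\tau=\sqrt{r^2s^2-|u|^2}\neq0$ by admissibility of the metric, both quantities are well defined and, with $C$ free, a solution exists for \emph{every} choice of $r,s,u$. By Corollary \ref{cor-char}, equivalently Theorem \ref{thm-main}, this yields $h^{1,1}_\delbar(M,J,\omega)=b^-+1$, and since $b^-=1$ the value is $2$, uniformly in the metric.

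For the second assertion I would use that $\omega$ is real and that we are in real dimension $4$, so $d\omega=\del\omega+\delbar\omega$ with $\overline{\del\omega}=\delbar\omega$; consequently $\omega$ is almost K\"ahler, i.e.\ $d\omega=0$, if and only if $\delbar\omega=0$. Computing $\delbar\omega$ from the formulas above, only $\phi^{2\c1\c2}$ survives and one finds $4i\,\delbar\omega=(-2ir^2+u+\c u)\phi^{2\c1\c2}=(-2ir^2+2\real(u))\phi^{2\c1\c2}$; setting the coefficient to zero gives the stated condition $\real(u)=ir^2$.

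The calculations themselves are entirely routine, so I expect no serious obstacle. The one genuinely conceptual step---and the feature distinguishing this example from the secondary Kodaira and Inoue cases---is the remark that the $\del$- and $\delbar$-images of the invariant $(1,1)$-forms each span only a one-dimensional space of $3$-forms. This is precisely what turns Corollary \ref{cor-char} into an \emph{under}-determined system, forcing $h^{1,1}_\delbar=b^-+1$ for all metrics, instead of an over-determined one whose solvability would cut out a constraint on $(r,s,u)$, as happens in the earlier examples.
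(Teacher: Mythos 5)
Your proposal is correct and follows essentially the same route as the paper: reduce the two identities of Corollary \ref{cor-char} to the single pair of scalar equations \eqref{eq-nil1-1}--\eqref{eq-nil1-2} (since all the $\del\phi^{i\c{j}}$, resp.\ $\delbar\phi^{i\c{j}}$, are multiples of $\phi^{12\c2}$, resp.\ $\phi^{2\c1\c2}$), solve for $A$ and $B-C$ by summing and subtracting, and conclude that the underdetermined system is always solvable, giving $h^{1,1}_\delbar=b^-+1=2$; the almost K\"ahler condition is likewise obtained by the same computation of $\delbar\omega$. No substantive differences from the paper's argument.
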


\subsection{$4$-nilmanifold without complex structures, II}

Let $M=\Gamma\backslash G$ be the $4$-dimensional nilmanifold which does not admit integrable almost complex structures as in the previous subsection.

We endow $G$ and $M$ with the left invariant almost complex structure $J$ given by
\[
\phi^1=e^1+ie^4,\ \ \ \phi^2=e^2+ie^3
\]
being a global coframe of the vector bundle of $(1,0)$ forms $T^{1,0}G$.  The associated structure equations are
\begin{align*}
&d\phi^1=\frac{1}4\big(-\phi^{12}+\phi^{1\c2}+\phi^{2\c1}+\phi^{\c1\c2}\big),\\
&d\phi^2=-\frac{i}4\big(\phi^{12}+\phi^{1\c2}-\phi^{2\c1}+\phi^{\c1\c2}\big).
\end{align*}
Note that the almost complex structure $J$ is non integrable.
This is the same almost complex structure considered in \cite[p. 18]{cattaneo-nannicini-tomassini}.
From the structure equations we derive
\begin{align*}
4i\del\phi^{1\c1}&=0,&
4i\delbar\phi^{1\c1}&=0,\\
4i\del\phi^{1\c2}&=-\phi^{12\c1}-i\phi^{12\c2},&
4i\delbar\phi^{1\c2}&=\phi^{1\c1\c2}+i\phi^{2\c1\c2},\\
4i\del\phi^{2\c1}&=\phi^{12\c1}-i\phi^{12\c2},&
4i\delbar\phi^{2\c1}&=-\phi^{1\c1\c2}+i\phi^{2\c1\c2},\\
4i\del\phi^{2\c2}&=0,&
4i\delbar\phi^{2\c2}&=0.
\end{align*}

Endow $(G,J)$ with a left invariant almost Hermitian metric
\begin{equation*}
\omega=ir^2\phi^{1\c1}+is^2\phi^{2\c2}+u\phi^{1\c2}-\c{u}\phi^{2\c1},
\end{equation*}
with $u\in\C$, $r,s\in\R$, $r,s>0$ and $rs>|u|$. Set $\tau=\sqrt{r^2s^2-|u|^2}\ne0$.

By Corollary \ref{cor-char}, we have that $h^{1,1}_\delbar(M,J,\omega)=b^-+1$ if and only if there exist $A,B,C\in\C$ such that
\begin{numcases}{}
-u-iAu+B\tau-\c{u}-iA\c{u}-C\tau=0,\label{eq-nil2-1}\\
u+iAu-B\tau-\c{u}-iA\c{u}-C\tau=0,\label{eq-nil2-2}\\
u-iAu+B\tau+\c{u}-iA\c{u}-C\tau=0,\label{eq-nil2-3}\\
u-iAu+B\tau-\c{u}+iA\c{u}+C\tau=0.\label{eq-nil2-4}
\end{numcases}
Summing \eqref{eq-nil2-1} and \eqref{eq-nil2-2}, subtracting \eqref{eq-nil2-1} from \eqref{eq-nil2-2}, summing \eqref{eq-nil2-3} and \eqref{eq-nil2-4}, subtracting \eqref{eq-nil2-3} from \eqref{eq-nil2-4}, we obtain
\[
B=C=u=0.
\]
Therefore, $u=0$ if and only if $h^{1,1}_\delbar(M,J,\omega)=b^-+1=2$, and $u\ne0$ if and only if $h^{1,1}_\delbar(M,J,\omega)=b^-=1$.

Note that $\omega$ is almost K\"ahler, i.e., $\delbar\omega=0$, if and only if
\begin{equation*}
u=0.
\end{equation*}

Summing up, we have just proved the following
\begin{proposition}
Let $M=\Gamma\backslash G$, $J$, $\omega$ be as above. We have $h^{1,1}_\delbar(M,J,\omega)=b^-+1=2$ iff $u=0$, otherwise $h^{1,1}_\delbar(M,J,\omega)=b^-=1$.
Moreover, $\omega$ is almost K\"ahler iff $u=0$.
\end{proposition}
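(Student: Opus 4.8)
The plan is to apply Corollary \ref{cor-char} verbatim, inserting the structure equations for the operators $\del$ and $\delbar$ acting on the $(1,1)$-forms $\phi^{i\c j}$ computed above, and to reduce the existence of the required left invariant anti self dual form $\gamma$ to the solvability of a linear system in $A,B,C\in\C$. The decisive simplification is that $\del\phi^{1\c1}=\del\phi^{2\c2}=0$ and $\delbar\phi^{1\c1}=\delbar\phi^{2\c2}=0$, so in both displayed conditions of Corollary \ref{cor-char} only the terms carrying $\del\phi^{1\c2},\del\phi^{2\c1}$ (resp.\ $\delbar\phi^{1\c2},\delbar\phi^{2\c1}$) survive. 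Substituting their explicit values and reading off the coefficients of the basis $3$-forms $\phi^{12\c1},\phi^{12\c2}$ from the $\del$-condition and of $\phi^{1\c1\c2},\phi^{2\c1\c2}$ from the $\delbar$-condition produces precisely the four scalar equations \eqref{eq-nil2-1}--\eqref{eq-nil2-4}.

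I would then solve this system by the elimination scheme already used in the earlier subsections: taking the sum and the difference of \eqref{eq-nil2-1} and \eqref{eq-nil2-2}, and likewise of \eqref{eq-nil2-3} and \eqref{eq-nil2-4}. This decouples everything into the two pairs of relations $u+iAu-B\tau=0$, $u-iAu+B\tau=0$ and $\c{u}+iA\c{u}+C\tau=0$, $-\c{u}+iA\c{u}+C\tau=0$. Adding the equations in the first pair forces $u=0$, and then $\tau\ne0$ gives $B=0$; with $u=0$ the second pair collapses to $C\tau=0$, so $C=0$, while $A\in\C$ remains arbitrary. Conversely $u=0,\ B=C=0$ visibly satisfies all four equations, so a left invariant anti self dual $\gamma$ with $id^c\gamma=d\omega$ exists if and only if $u=0$. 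By Theorem \ref{thm-main} this yields $h^{1,1}_\delbar(M,J,\omega)=b^-+1=2$ when $u=0$ and $h^{1,1}_\delbar(M,J,\omega)=b^-=1$ when $u\ne0$.

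The point to get right is conceptual rather than computational: although the four relations are linear in $A,B,C$, they form an over-determined system (four equations, three unknowns) whose compatibility is itself a condition on the metric, namely $u=0$. Recognizing that the two decoupled pairs collapse to $u=0$ --- instead of merely pinning down $A,B,C$ --- is exactly what creates the dichotomy between $b^-+1$ and $b^-$; the rest is bookkeeping with the coefficients, so I do not anticipate a genuine difficulty.

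For the almost K\"ahler statement I would compute $\delbar\omega$ directly from \eqref{eq-omega}. Because $\delbar\phi^{1\c1}=\delbar\phi^{2\c2}=0$, only the $u\phi^{1\c2}-\c{u}\phi^{2\c1}$ part contributes, and inserting the expressions for $\delbar\phi^{1\c2}$ and $\delbar\phi^{2\c1}$ gives $4i\delbar\omega=(u+\c{u})\phi^{1\c1\c2}+i(u-\c{u})\phi^{2\c1\c2}$. Since $\omega$ is real and $n=2$ we have $d\omega=\del\omega+\delbar\omega$ with $\del\omega=\c{\delbar\omega}$, hence $d\omega=0\iff\delbar\omega=0$; and the right-hand side vanishes if and only if $\real(u)=0$ and $\img(u)=0$, i.e.\ $u=0$. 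This shows $\omega$ is almost K\"ahler exactly when $u=0$, completing the proof.
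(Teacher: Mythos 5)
Your proposal is correct and follows essentially the same route as the paper: Corollary \ref{cor-char} reduces to the four scalar equations \eqref{eq-nil2-1}--\eqref{eq-nil2-4}, whose sums and differences force $u=B=C=0$ (with $A$ free), giving the dichotomy, and the direct computation $4i\delbar\omega=(u+\c{u})\phi^{1\c1\c2}+i(u-\c{u})\phi^{2\c1\c2}$ settles the almost K\"ahler claim exactly as in the text.
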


\subsection{Hyperelliptic surface, I}

Let $M=\Gamma\backslash G$ be a Hyperelliptic surface. Here $G$ is a solvable Lie group and $\Gamma$ is a cocompact lattice. We refer to \cite[pp. 754,760]{Ha} for its construction and for the structure equations of the global coframe $\{e^1,e^2,e^3,e^4\}$ of left invariant $1$-forms on $G$,
\[
de^1=-e^{23},\ \ \ de^2=e^{13},\ \ \ de^3=0,\ \ \ de^4=0.
\]
Recall that $b^2=2$ and $b^-=1$.

We endow $G$ and $M$ with the left invariant almost complex structure $J$ given by
\[
\phi^1=e^1+ie^3,\ \ \ \phi^2=e^2+ie^4
\]
being a global coframe of the vector bundle of $(1,0)$ forms $T^{1,0}G$.  The associated structure equations are
\begin{align*}
&d\phi^1=-\frac{i}4\big(\phi^{12}+\phi^{1\c2}+\phi^{2\c1}-\phi^{\c1\c2}\big),\\
&d\phi^2=\frac{i}2\phi^{1\c1},
\end{align*}
therefore the almost complex structure $J$ is non integrable.
This is the same almost complex structure considered in \cite[Section 6]{PT4}.
From the structure equations we derive
\begin{align*}
4i\del\phi^{1\c1}&=0,&
4i\delbar\phi^{1\c1}&=0,\\
4i\del\phi^{1\c2}&=\phi^{12\c2},&
4i\delbar\phi^{1\c2}&=\phi^{2\c1\c2},\\
4i\del\phi^{2\c1}&=\phi^{12\c2},&
4i\delbar\phi^{2\c1}&=\phi^{2\c1\c2},\\
4i\del\phi^{2\c2}&=-2\phi^{12\c1},&
4i\delbar\phi^{2\c2}&=-2\phi^{1\c1\c2}.
\end{align*}

Endow $(G,J)$ with a left invariant almost Hermitian metric
\begin{equation*}
\omega=ir^2\phi^{1\c1}+is^2\phi^{2\c2}+u\phi^{1\c2}-\c{u}\phi^{2\c1},
\end{equation*}
with $u\in\C$, $r,s\in\R$, $r,s>0$ and $rs>|u|$. Set $\tau=\sqrt{r^2s^2-|u|^2}\ne0$.

By Corollary \ref{cor-char}, we have that $h^{1,1}_\delbar(M,J,\omega)=b^-+1$ if and only if there exist $A,B,C\in\C$ such that
\begin{numcases}{}
is^2r^2-A(2|u|^2-r^2s^2)-i\tau(B\c{u}-Cu)=0,\label{eq-hyp1-1}\\
is^2r^2+A(2|u|^2-r^2s^2)+i\tau(B\c{u}-Cu)=0,\label{eq-hyp1-2}\\
u+iAu-B\tau-\c{u}-iA\c{u}-C\tau=0,\label{eq-hyp1-3}\\
u-iAu+B\tau-\c{u}+iA\c{u}+C\tau=0.\label{eq-hyp1-4}
\end{numcases}
Summing \eqref{eq-hyp1-1} and \eqref{eq-hyp1-2}, we obtain
\[
is^2r^2=0,
\]
which cannot be since $r,s>0$.
Therefore it always holds that $h^{1,1}_\delbar(M,J,\omega)=b^-=1$.

Let us also check if there exist almost K\"ahler metrics on $(M,J)$.
Note that $\omega$ is almost K\"ahler, i.e., $\delbar\omega=0$, if and only if
\begin{equation*}
\begin{cases}
u=\c{u},\\
is^2=0,
\end{cases}
\end{equation*}
which cannot be since $s>0$.
Therefore, by Proposition \ref{prop-alm-kahl}, there exist no almost K\"ahler metrics on $(M,J)$.

Summing up, we have just proved the following
\begin{proposition}
Let $M=\Gamma\backslash G$, $J$, $\omega$ be as above. We have $h^{1,1}_\delbar(M,J,\omega)=b^-=1$ for every left invariant almost Hermitian metric $\omega$ on $(G,J)$.
Moreover, there exist no almost K\"ahler metrics on $(M,J)$.
\end{proposition}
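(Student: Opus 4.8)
The plan is to invoke Corollary \ref{cor-char} directly, which applies because every left invariant almost Hermitian metric is Gauduchon (as recorded in the proof of Theorem \ref{thm-main}). Writing $\omega$ in the normalized form \eqref{eq-omega} with parameters $r,s,u$, the corollary reduces the equality $h^{1,1}_\delbar(M,J,\omega)=b^-+1$ to the solvability, in $A,B,C\in\C$, of the two displayed identities it provides. First I would record the action of $\del$ and $\delbar$ on the basis $(1,1)$-forms $\phi^{i\c j}$, read off from the structure equations for $d\phi^1,d\phi^2$; this is routine bidegree bookkeeping. Substituting these and collecting the coefficients of the independent $(2,1)$-forms $\phi^{12\c1},\phi^{12\c2}$ (from the $\del$ identity) and of the $(1,2)$-forms $\phi^{1\c1\c2},\phi^{2\c1\c2}$ (from the $\delbar$ identity) produces the linear system \eqref{eq-hyp1-1}--\eqref{eq-hyp1-4}.

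The decisive step, which I expect to be the crux, is the observation that adding \eqref{eq-hyp1-1} and \eqref{eq-hyp1-2} cancels every occurrence of the free parameters $A,B,C$ and forces $is^2r^2=0$. Since $r,s>0$ this is impossible, so the system admits no solution and Corollary \ref{cor-char} yields $h^{1,1}_\delbar(M,J,\omega)=b^-=1$ for \emph{every} left invariant almost Hermitian $\omega$. The content of the argument is precisely that the coefficient $is^2r^2$, coming from the metric itself, obstructs the extra harmonic form regardless of how one chooses $A,B,C$; everything else is mechanical.

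For the second assertion I would argue through the symmetrization principle of Proposition \ref{prop-alm-kahl}, so that it suffices to rule out a left invariant almost K\"ahler metric on $(G,J)$. Since $\omega$ in \eqref{eq-omega} is real, one has $\c{\del\omega}=\delbar\omega$, and as $d=\del+\delbar$ on $(1,1)$-forms in real dimension $4$, the condition $d\omega=0$ is equivalent to $\delbar\omega=0$. Computing $\delbar\omega$ from the structure equations and setting it to zero forces in particular $is^2=0$, incompatible with $s>0$. Hence $(G,J)$ carries no left invariant almost K\"ahler metric, and by Proposition \ref{prop-alm-kahl} none exists on $(M,J)$. The only care needed here is to track which independent $(1,2)$-form each coefficient sits in, so that the vanishing is read off correctly; the obstruction $is^2=0$ then appears for all $u$, mirroring the obstruction found in the computation of $h^{1,1}_\delbar$.
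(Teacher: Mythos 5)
Your proposal is correct and follows essentially the same route as the paper: it applies Corollary \ref{cor-char}, derives the system \eqref{eq-hyp1-1}--\eqref{eq-hyp1-4}, and sums the first two equations to obtain the contradiction $is^2r^2=0$, then rules out almost K\"ahler metrics via Proposition \ref{prop-alm-kahl} and the condition $\delbar\omega=0$, which forces $is^2=0$. No gaps.
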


\subsection{Hyperelliptic surface, II}

Let $M=\Gamma\backslash G$ be a Hyperelliptic surface, as in the previous subsection.

We endow $G$ and $M$ with the deformed left invariant almost complex structure $J_t$ given by
\[
\phi^1=(1+t)e^1+i(1-t)e^2,\ \ \ \phi^2=e^3+ie^4,
\]
with $t\in\C$, $0<|t|<1$,
being a global coframe of the vector bundle of $(1,0)$ forms $T^{1,0}G$. 
This is the same deformed almost complex structure considered in \cite[Section 2]{PT5}. For $t=0$, this is the usual K\"ahler complex structure on Hyperelliptic surfaces.
The associated structure equations are
\begin{align*}
&d\phi^1=\frac{i}{2(1-|t|^2)}\big((1+|t|^2)(\phi^{12}+\phi^{1\c2})+2t(\phi^{2\c1}-\phi^{\c1\c2})\big),\\
&d\phi^2=0,
\end{align*}
therefore the almost complex structure $J_t$ is non integrable.
From the structure equations we derive
\begin{align*}
2i(1-|t|^2)\del\phi^{1\c1}&=0,&
2i(1-|t|^2)\delbar\phi^{1\c1}&=0,\\
2i(1-|t|^2)\del\phi^{1\c2}&=-(1+|t|^2)\phi^{12\c2},&
2i(1-|t|^2)\delbar\phi^{1\c2}&=-2t\phi^{2\c1\c2},\\
2i(1-|t|^2)\del\phi^{2\c1}&=-2\c{t}\phi^{12\c2},&
2i(1-|t|^2)\delbar\phi^{2\c1}&=-(1+|t|^2)\phi^{2\c1\c2},\\
2i(1-|t|^2)\del\phi^{2\c2}&=0,&
2i(1-|t|^2)\delbar\phi^{2\c2}&=0.
\end{align*}

Endow $(G,J_t)$ with a left invariant almost Hermitian metric
\begin{equation*}
\omega_t=ir^2\phi^{1\c1}+is^2\phi^{2\c2}+u\phi^{1\c2}-\c{u}\phi^{2\c1},
\end{equation*}
with $u\in\C$, $r,s\in\R$, $r,s>0$ and $rs>|u|$. Here $r,s,u$ may depend on $t$. Set $\tau=\sqrt{r^2s^2-|u|^2}\ne0$.

By Corollary \ref{cor-char}, we have that $h^{1,1}_\delbar(M,J_t,\omega_t)=b^-+1$ if and only if there exist $A,B,C\in\C$ such that
\begin{equation*}
\begin{cases}
(1+|t|^2)(u+iAu-B\tau)+2\c{t}(-\c{u}-iA\c{u}-C\tau)=0,\\
2t(u-iAu+B\tau)+(1+|t|^2)(-\c{u}+iA\c{u}+C\tau)=0.
\end{cases}
\end{equation*}
It is easy to check that the rank of the matrix associated to this system is $2$.
Therefore it always holds that $h^{1,1}_\delbar(M,J_t,\omega_t)=b^-+1=2$.

Note that $\omega_t$ is almost K\"ahler, i.e., $\delbar\omega_t=0$, if and only if
\begin{equation*}
2tu=(1+|t|^2)\c{u},
\end{equation*}
which is equivalent to $u=0$.

Summing up, we have just proved the following
\begin{proposition}
Let $M=\Gamma\backslash G$, $J_t$, $\omega_t$ be as above. We have $h^{1,1}_\delbar(M,J_t,\omega_t)=b^-+1=2$ for every left invariant almost Hermitian metric $\omega_t$ on $(G,J_t)$.
Moreover, $\omega_t$ is almost K\"ahler iff $u=0$.
\end{proposition}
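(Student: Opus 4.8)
The plan is to specialize Corollary \ref{cor-char} to the coframe $\phi^1,\phi^2$ adapted to $J_t$ and read off the resulting constraints on $A,B,C$. The decisive simplification is that the structure equations give $\del\phi^{1\c1}=\del\phi^{2\c2}=\delbar\phi^{1\c1}=\delbar\phi^{2\c2}=0$, so in each of the two equations of Corollary \ref{cor-char} only the $\phi^{1\c2}$ and $\phi^{2\c1}$ terms survive. After substituting $\del\phi^{1\c2},\del\phi^{2\c1}$ (both proportional to $\phi^{12\c2}$) and $\delbar\phi^{1\c2},\delbar\phi^{2\c1}$ (both proportional to $\phi^{2\c1\c2}$), each equation collapses to a single scalar condition, namely the displayed system
\[
\begin{cases}
(1+|t|^2)(u+iAu-B\tau)+2\c{t}(-\c{u}-iA\c{u}-C\tau)=0,\\
2t(u-iAu+B\tau)+(1+|t|^2)(-\c{u}+iA\c{u}+C\tau)=0.
\end{cases}
\]

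By Theorem \ref{thm-main} it then suffices to show this system in the unknowns $A,B,C$ is solvable for every admissible $u,r,s,t$. Since there are two equations and three unknowns, I would prove solvability by checking that the coefficient matrix has full row rank $2$; the cleanest route is to compute the $2\times2$ minor built from the $B$- and $C$-columns, whose entries are $-(1+|t|^2)\tau,\,-2\c{t}\tau$ in the first row and $2t\tau,\,(1+|t|^2)\tau$ in the second. Its determinant is
\[
\tau^2\big(-(1+|t|^2)^2+4|t|^2\big)=-\tau^2(1-|t|^2)^2.
\]
Because $\tau\ne0$ and $0<|t|<1$, this is nonzero, so the rank is $2$ and the system admits a solution for any right-hand side. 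Hence a left invariant anti self dual $(1,1)$-form $\gamma$ with $id^c\gamma=d\omega_t$ always exists, and Theorem \ref{thm-main} gives $h^{1,1}_\delbar(M,J_t,\omega_t)=b^-+1=2$ (recall $b^-=1$).

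For the almost K\"ahler statement I would compute $\delbar\omega_t$ directly. As $\delbar\phi^{1\c1}=\delbar\phi^{2\c2}=0$, only the $u\phi^{1\c2}$ and $-\c{u}\phi^{2\c1}$ terms contribute, yielding
\[
\delbar\omega_t=\frac{1}{2i(1-|t|^2)}\big(-2tu+(1+|t|^2)\c{u}\big)\phi^{2\c1\c2},
\]
so $\delbar\omega_t=0$ is equivalent to $2tu=(1+|t|^2)\c{u}$. To reduce this to $u=0$, I would take moduli: if $u\ne0$ then $2|t|=1+|t|^2$, i.e.\ $(1-|t|)^2=0$, forcing $|t|=1$ against $0<|t|<1$; thus $u=0$, and the converse is immediate. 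Since $d\omega_t=\del\omega_t+\delbar\omega_t$ with $\c{\omega_t}=\omega_t$, this also characterizes almost K\"ahlerness $d\omega_t=0$.

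The only real obstacle is the rank-$2$ claim, which the computation dispatches as \emph{easy to check}: the point is to recognize that the cross term $-(2\c{t})(2t)=-4|t|^2$ combines with $-(1+|t|^2)^2$ into the perfect square $-(1-|t|^2)^2$, whose nonvanishing relies precisely on the strict bound $|t|<1$. The same strict bound, resurfacing as $(1-|t|)^2=0$, is exactly what collapses the almost K\"ahler condition to $u=0$; everything else is routine linear algebra.
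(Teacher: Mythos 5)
Your proposal is correct and follows essentially the same route as the paper: it specializes Corollary \ref{cor-char} to obtain the identical two-equation system in $A,B,C$, establishes solvability via the rank of the coefficient matrix, and reduces $\delbar\omega_t=0$ to $2tu=(1+|t|^2)\c{u}$ and hence to $u=0$. The only difference is that you explicitly carry out the two computations the paper leaves implicit --- the $B,C$-minor determinant $-\tau^2(1-|t|^2)^2\ne0$ and the modulus argument forcing $u=0$ --- both of which are correct.
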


\subsection{Primary Kodaira surface, I}

Let $M=\Gamma\backslash G$ be the primary Kodaira surface. Here $G$ is a nilpotent Lie group and $\Gamma$ is a cocompact lattice. We refer to \cite[pp. 755,760]{Ha} for its construction and for the structure equations of the global coframe $\{e^1,e^2,e^3,e^4\}$ of left invariant $1$-forms on $G$,
\[
de^1=0,\ \ \ de^2=0,\ \ \ de^3=-e^{12},\ \ \ de^4=0.
\]
Recall that $b^2=2$ and $b^-=1$.

We endow $G$ and $M$ with the left invariant almost complex structure $J_\alpha$ given by
\[
\phi^1=(e^1+\alpha e^4)+ie^3,\ \ \ \phi^2=e^2+ie^4,
\]
with $\alpha\in\R$,
being a global coframe of the vector bundle of $(1,0)$ forms $T^{1,0}G$.  The associated structure equations are
\begin{align*}
&d\phi^1=-\frac{1}4\big(i\phi^{12}+i\phi^{1\c2}-i\phi^{2\c1}-2\alpha\phi^{2\c2}+i\phi^{\c1\c2}\big),\\
&d\phi^2=0,
\end{align*}
therefore the almost complex structure $J_\alpha$ is non integrable.
This is the same almost complex structure considered in \cite[Example 4.1]{TT}.
From the structure equations we derive
\begin{align*}
4\del\phi^{1\c1}&=2\alpha\phi^{12\c2},&
4\delbar\phi^{1\c1}&=-2\alpha\phi^{2\c1\c2},\\
4\del\phi^{1\c2}&=-i\phi^{12\c2},&
4\delbar\phi^{1\c2}&=i\phi^{2\c1\c2},\\
4\del\phi^{2\c1}&=i\phi^{12\c2},&
4\delbar\phi^{2\c1}&=-i\phi^{2\c1\c2},\\
4\del\phi^{2\c2}&=0,&
4\delbar\phi^{2\c2}&=0.
\end{align*}

Endow $(G,J_\alpha)$ with a left invariant almost Hermitian metric
\begin{equation*}
\omega_\alpha=ir^2\phi^{1\c1}+is^2\phi^{2\c2}+u\phi^{1\c2}-\c{u}\phi^{2\c1},
\end{equation*}
with $u\in\C$, $r,s\in\R$, $r,s>0$ and $rs>|u|$. Here $r,s,u$ may depend on $\alpha$. Set $\tau=\sqrt{r^2s^2-|u|^2}\ne0$.

By Corollary \ref{cor-char}, we have that $h^{1,1}_\delbar(M,J_\alpha,\omega_\alpha)=b^-+1$ if and only if there exist $A,B,C\in\C$ such that
\begin{numcases}{}
2i\alpha r^2-2A\alpha r^2-iu+Au+iB\tau-i\c{u}+A\c{u}-iC\tau=0,\label{eq-kod1-1}\\
-2i\alpha r^2-2A\alpha r^2+iu+Au+iB\tau+i\c{u}+A\c{u}-iC\tau=0.\label{eq-kod1-2}
\end{numcases}
Summing \eqref{eq-kod1-1} and \eqref{eq-kod1-2}, subtracting \eqref{eq-kod1-1} from \eqref{eq-kod1-2}, we obtain
\begin{equation*}
\begin{cases}
u+\c{u}=2\alpha r^2,\\
B=C.
\end{cases}
\end{equation*}
Therefore, $\real(u)=\alpha r^2$ if and only if $h^{1,1}_\delbar(M,J_\alpha,\omega_\alpha)=b^-+1=2$, and $\real(u)\ne\alpha r^2$ if and only if $h^{1,1}_\delbar(M,J_\alpha,\omega_\alpha)=b^-=1$.

Note that $\omega_\alpha$ is almost K\"ahler, i.e., $\delbar\omega_\alpha=0$, if and only if
\begin{equation*}
\real(u)=\alpha r^2.
\end{equation*}

Summing up, we have just proved the following
\begin{proposition}
Let $M=\Gamma\backslash G$, $J_\alpha$, $\omega_\alpha$ be as above. We have $h^{1,1}_\delbar(M,J_\alpha,\omega_\alpha)=b^-+1=2$ iff $\real(u)=\alpha r^2$, otherwise $h^{1,1}_\delbar(M,J_\alpha,\omega_\alpha)=b^-=1$.
Moreover, $\omega_\alpha$ is almost K\"ahler iff $\real(u)=\alpha r^2$.
\end{proposition}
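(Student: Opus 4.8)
The plan is to feed the given structure equations directly into Corollary \ref{cor-char} and then solve the resulting linear system in the auxiliary constants $A,B,C$. Since $\omega_\alpha$ is already presented in the normalized form \eqref{eq-omega}, Corollary \ref{cor-char} tells us that $h^{1,1}_\delbar(M,J_\alpha,\omega_\alpha)=b^-+1$ precisely when its two displayed $\del$- and $\delbar$-equations admit a solution $A,B,C\in\C$. So the first step is to substitute the computed $\del\phi^{p\c q}$ and $\delbar\phi^{p\c q}$ into those two equations. The crucial simplification is that $\del\phi^{2\c2}=\delbar\phi^{2\c2}=0$, which kills the coefficient of $\del\phi^{2\c2}$ (the one carrying the awkward $A(2|u|^2-r^2s^2)$ and $B\c u-Cu$ terms). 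Moreover every surviving $\del\phi^{p\c q}$ is a multiple of $\phi^{12\c2}$ alone and every surviving $\delbar\phi^{p\c q}$ is a multiple of $\phi^{2\c1\c2}$ alone, so each form-valued equation collapses to a single scalar condition. These are exactly \eqref{eq-kod1-1} and \eqref{eq-kod1-2}.

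Next I would solve this $2\times 3$ system by the sum/difference trick. Taking the difference \eqref{eq-kod1-2}$-$\eqref{eq-kod1-1}, the $A$-terms and the $B\tau$, $C\tau$ terms cancel and the $\alpha r^2$-terms combine, leaving the purely metric relation $u+\c u=2\alpha r^2$, that is $\real(u)=\alpha r^2$, with no dependence on $A,B,C$. Feeding this back into the sum \eqref{eq-kod1-1}$+$\eqref{eq-kod1-2} collapses every $\alpha r^2$-term and leaves only $i\tau(B-C)=0$, hence $B=C$ since $\tau\ne 0$, while $A$ stays free. Thus the system is solvable if and only if $\real(u)=\alpha r^2$: in that case any $A$ together with $B=C$ works, while if $\real(u)\ne\alpha r^2$ the difference equation already obstructs every solution. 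By Corollary \ref{cor-char} this yields $h^{1,1}_\delbar=b^-+1=2$ iff $\real(u)=\alpha r^2$, and $h^{1,1}_\delbar=b^-=1$ otherwise.

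For the almost K\"ahler statement I would argue separately that $\omega_\alpha$ is almost K\"ahler, i.e. $d\omega_\alpha=0$, exactly under the same condition. In real dimension $4$ a real $(1,1)$-form has no $(3,0)$ or $(0,3)$ component, so $d\omega_\alpha=\del\omega_\alpha+\delbar\omega_\alpha$; and since $\omega_\alpha$ is real, $\del\omega_\alpha=\c{\delbar\omega_\alpha}$, whence $d\omega_\alpha=0$ iff $\delbar\omega_\alpha=0$. Computing $\delbar\omega_\alpha$ from the structure equations (again using $\delbar\phi^{2\c2}=0$) produces a single multiple of $\phi^{2\c1\c2}$ whose coefficient is proportional to $u+\c u-2\alpha r^2$, which vanishes iff $\real(u)=\alpha r^2$. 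This matches the harmonicity computation, as it should, and completes the statement.

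I do not expect a genuine obstacle here: the argument is a controlled bookkeeping of coefficients, and the only real \emph{idea} is the sum/difference manipulation that decouples the metric condition $\real(u)=\alpha r^2$ from the free constants $A,B,C$. The one place that demands care is the reduction of the two form-valued equations of Corollary \ref{cor-char} down to the scalar equations \eqref{eq-kod1-1} and \eqref{eq-kod1-2}, where the signs and the factors of $i$ coming from $d\phi^1=-\tfrac14(i\phi^{12}+i\phi^{1\c2}-i\phi^{2\c1}-2\alpha\phi^{2\c2}+i\phi^{\c1\c2})$ must be tracked exactly; everything downstream is then forced.
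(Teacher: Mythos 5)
Your proposal is correct and follows the paper's proof essentially verbatim: substitute the structure equations into Corollary \ref{cor-char}, reduce to the two scalar equations \eqref{eq-kod1-1}--\eqref{eq-kod1-2}, and use the sum/difference manipulation to isolate the obstruction $u+\c u=2\alpha r^2$ (with $B=C$ and $A$ free), then check $\delbar\omega_\alpha=0$ separately for the almost K\"ahler claim. All coefficient computations match the paper's.
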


\subsection{Primary Kodaira surface, II}

Let $M=\Gamma\backslash G$ be the primary Kodaira surface, as in the previous subsection.

We endow $G$ and $M$ with the left invariant almost complex structure $J_\beta$ given by
\[
\phi^1=e^4+ie^1,\ \ \ \phi^2=e^2-i\beta e^3,
\]
with $\beta\in\R\setminus\{0\}$,
being a global coframe of the vector bundle of $(1,0)$ forms $T^{1,0}G$. The associated structure equations are
\begin{align*}
&d\phi^1=0,\\
&d\phi^2=\frac{\beta}4\big(\phi^{12}+\phi^{1\c2}+\phi^{2\c1}-\phi^{\c1\c2}\big),
\end{align*}
therefore the almost complex structure $J_\beta$ is non integrable.
This is the same almost complex structure considered in \cite[Section 6]{CZ}, \cite[Section 2]{HZ} and \cite[Section 5]{PT4}.
From the structure equations we derive
\begin{align*}
\frac4\beta\del\phi^{1\c1}&=0,&
\frac4\beta\delbar\phi^{1\c1}&=0,\\
\frac4\beta\del\phi^{1\c2}&=\phi^{12\c1},&
\frac4\beta\delbar\phi^{1\c2}&=-\phi^{1\c1\c2},\\
\frac4\beta\del\phi^{2\c1}&=\phi^{12\c1},&
\frac4\beta\delbar\phi^{2\c1}&=-\phi^{1\c1\c2},\\
\frac4\beta\del\phi^{2\c2}&=0,&
\frac4\beta\delbar\phi^{2\c2}&=0.
\end{align*}

Endow $(G,J_\beta)$ with a left invariant almost Hermitian metric
\begin{equation*}
\omega_\beta=ir^2\phi^{1\c1}+is^2\phi^{2\c2}+u\phi^{1\c2}-\c{u}\phi^{2\c1},
\end{equation*}
with $u\in\C$, $r,s\in\R$, $r,s>0$ and $rs>|u|$. Set $\tau=\sqrt{r^2s^2-|u|^2}\ne0$.

By Corollary \ref{cor-char}, we have that $h^{1,1}_\delbar(M,J_\beta,\omega_\beta)=b^-+1$ if and only if there exist $A,B,C\in\C$ such that
\begin{numcases}{}
u+iAu-B\tau-\c{u}-iA\c{u}-C\tau=0,\label{eq-kod2-1}\\
u-iAu+B\tau-\c{u}+iA\c{u}+C\tau=0.\label{eq-kod2-2}
\end{numcases}
Summing \eqref{eq-kod2-1} and \eqref{eq-kod2-2}, subtracting \eqref{eq-kod2-1} from \eqref{eq-kod2-2}, we obtain
\begin{equation*}
\begin{cases}
u=\c{u},\\
B=-C.
\end{cases}
\end{equation*}
Therefore, $\img(u)=0$ if and only if $h^{1,1}_\delbar(M,J_\beta,\omega_\beta)=b^-+1=2$, and $\img(u)\ne0$ if and only if $h^{1,1}_\delbar(M,J_\beta,\omega_\beta)=b^-=1$.

Note that $\omega_\beta$ is almost K\"ahler, i.e., $\delbar\omega_\beta=0$, if and only if
\begin{equation*}
\img(u)=0.
\end{equation*}

Summing up, we have just proved the following
\begin{proposition}
Let $M=\Gamma\backslash G$, $J_\beta$, $\omega_\beta$ be as above. We have $h^{1,1}_\delbar(M,J_\beta,\omega_\beta)=b^-+1=2$ iff $\img(u)=0$, otherwise $h^{1,1}_\delbar(M,J_\beta,\omega_\beta)=b^-=1$.
Moreover, $\omega_\beta$ is almost K\"ahler iff $\img(u)=0$.
\end{proposition}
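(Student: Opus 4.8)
The plan is to funnel everything through Corollary \ref{cor-char}, which translates the computation of $h^{1,1}_\delbar(M,J_\beta,\omega_\beta)$ into the solvability of an explicit linear system in the auxiliary unknowns $A,B,C\in\C$ that parametrise a left invariant anti self dual $(1,1)$-form $\gamma$ with $id^c\gamma=d\omega_\beta$. Since every left invariant almost Hermitian metric is Gauduchon — as recorded in the proof of Theorem \ref{thm-main} — the corollary applies verbatim, so it suffices to decide whether its two conditions, for the metric $\omega_\beta$, admit a common solution $(A,B,C)$: if they do, then $h^{1,1}_\delbar=b^-+1=2$, and if they do not, then $h^{1,1}_\delbar=b^-=1$.

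First I would substitute the structure equations for $J_\beta$ — the displayed formulas for $\del\phi^{i\c j}$ and $\delbar\phi^{i\c j}$ — into the two conditions of Corollary \ref{cor-char}. The only point requiring care, and the source of the simplification, is that among $\phi^{1\c1},\phi^{2\c2},\phi^{1\c2},\phi^{2\c1}$ the operator $\del$ annihilates the first two and sends the last two to the \emph{same} $(2,1)$-form, $\del\phi^{1\c2}=\del\phi^{2\c1}=\tfrac{\beta}{4}\phi^{12\c1}$; symmetrically $\delbar\phi^{1\c2}=\delbar\phi^{2\c1}=-\tfrac{\beta}{4}\phi^{1\c1\c2}$ while $\delbar$ kills $\phi^{1\c1},\phi^{2\c2}$. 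Hence, after dividing by $\tfrac{\beta}{4}\neq0$, the holomorphic condition collapses to the single coefficient equation of $\phi^{12\c1}$, namely \eqref{eq-kod2-1}, and the antiholomorphic condition collapses to the coefficient equation of $\phi^{1\c1\c2}$, namely \eqref{eq-kod2-2}. Thus the a priori four coefficient equations reduce to exactly two.

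Next I would solve this $2\times3$ system. Adding \eqref{eq-kod2-1} and \eqref{eq-kod2-2} cancels the $A$- and $B,C$-terms and leaves $2(u-\c u)=0$, forcing $u=\c u$, i.e.\ $\img(u)=0$; subtracting them and using $u=\c u$ together with $\tau\neq0$ yields $B=-C$. Therefore the system is consistent precisely when $\img(u)=0$, in which case $A$ is free and $C=-B$, so a left invariant anti self dual $\gamma$ solving $id^c\gamma=d\omega_\beta$ exists and $h^{1,1}_\delbar=b^-+1=2$; when $\img(u)\neq0$ no such $\gamma$ exists and $h^{1,1}_\delbar=b^-=1$.

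Finally, for the almost K\"ahler statement I would note that $\omega_\beta$ is a real $(1,1)$-form on a $4$-manifold, so $\mu\omega_\beta=\c\mu\omega_\beta=0$ (both targets vanish for $n=2$) and $d\omega_\beta=\del\omega_\beta+\delbar\omega_\beta$ with the two summands complex conjugate; hence $d\omega_\beta=0$ if and only if $\delbar\omega_\beta=0$. A direct substitution gives $\delbar\omega_\beta=\tfrac{\beta}{4}(\c u-u)\phi^{1\c1\c2}$, which vanishes exactly when $\img(u)=0$. As the whole argument is a finite explicit computation routed through Corollary \ref{cor-char}, there is no genuine analytic obstacle; the only step demanding attention is the bookkeeping that collapses the four coefficient equations into the two in \eqref{eq-kod2-1}--\eqref{eq-kod2-2}, after which the stated dichotomy is immediate.
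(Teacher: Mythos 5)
Your proposal is correct and follows essentially the same route as the paper: apply Corollary \ref{cor-char}, observe that only the coefficients of $\phi^{12\c1}$ and $\phi^{1\c1\c2}$ survive so the system reduces to \eqref{eq-kod2-1}--\eqref{eq-kod2-2}, and then sum and subtract to get $u=\c u$ and $B=-C$, which gives solvability precisely when $\img(u)=0$. The almost K\"ahler computation $\delbar\omega_\beta=\tfrac{\beta}{4}(\c u-u)\phi^{1\c1\c2}$ likewise matches the paper's conclusion.
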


\end{document}